\newcommand{\cmark}{\ding{51}}%
\newcommand{\xmark}{\ding{55}}%
\numberwithin{equation}{section}
\crefname{hypothesis}{Hypothesis}{Hypotheses}
\definecolor{darkgreen}{rgb}{0,0,0}
\title{Robust Eigenvectors of Symmetric Tensors\thanks{Submitted to the editors DATE.
\funding{Elina Robeva was supported by an NSERC Discovery grant (DGECR-2020-00338).
Konstantin Usevich was supported by the ANR grant LeaFleT (ANR-19-CE23-0021).}}}
\author{Tommi Muller\thanks{University of Oxford, Oxford, UK (\email{ozitommi@gmail.com}).}
\and Elina Robeva\thanks{University of British Columbia, Vancouver, Canada (\email{erobeva@math.ubc.ca}).}
\and Konstantin Usevich\thanks{Université de Lorraine, CNRS, Nancy, France (\email{konstantin.usevich@cnrs.fr}).}}
\begin{document}

\maketitle

\begin{abstract} The {\em tensor power method} generalizes the matrix power method to higher order arrays, or tensors. Like in the matrix case, the fixed points of the tensor power method are the eigenvectors of the tensor. While every real symmetric matrix has an eigendecomposition, the vectors generating a symmetric decomposition of a real symmetric tensor are not always eigenvectors of the tensor.

In this paper we show that whenever an eigenvector {\em is} a generator of the symmetric decomposition of a symmetric tensor, then (if the order of the tensor is sufficiently high) this eigenvector is {\em robust}, i.e., it is an attracting fixed point of the tensor power method. We exhibit new classes of symmetric tensors whose symmetric decomposition consists of eigenvectors. Generalizing orthogonally decomposable tensors, we consider {\em equiangular tight frame decomposable} and {\em equiangular set decomposable} tensors.  Our main result implies that such tensors can be decomposed using the tensor power method.

\end{abstract}

\begin{keywords}
tensor power method, robust eigenvector, symmetric tensor, equiangular tight frame, tensor decomposition
\end{keywords}

\begin{MSCcodes}
15A69, 15A18, 42C15, 65F15
\end{MSCcodes}

\section{Introduction}

With the rising demand for techniques to handle massive, high-dimensional datasets, many scientists have turned to finding adaptations of matrix algorithms to high-order arrays, known as tensors.
The main obstacle is that
determining quantities such as the rank, singular values, and eigenvalues~\cite{Hitchcock,LHLim,qi2005eigenvalues} of a general tensor is an NP-hard problem \cite{HL}. Nonetheless some heuristics have been proposed for computing such quantities~\cite{Anandkumar2014,pmlr-v40-Anandkumar15,KoBa09,shiftedPower,nie2014simax} and efficient algorithms for \textcolor{darkgreen}{tensor decomposition exist for many families of tensors  (see, e.g., \cite{Jennrich,comon2009tensor,Sorber2013,kileel2021subspace})}.
Furthermore, the decomposition, approximations, eigenvectors, and algebraic characterization have been thoroughly studied in the special case of orthogonally decomposable tensors~\cite{Anandkumar2014,BDHR,tensorTrains,kolda2015symmetric,li2018jacobi,MuHsuGoldfarb,Robeva}. 

The computation of eigenvectors and singular vectors is particularly important because it is tightly linked to the best rank-one approximation problem \cite{chen2009simax}. A recurring tool that has been used in several of the works cited here is the \textit{tensor power method}~\cite{DeLathauwer1997,DeLathauwer1995,DeLathauwer2000,kofidis2002simax}, which generalizes the well-known matrix power method.
For non-symmetric tensors, the tensor power method is globally convergent \cite{uschmajew2015pjo}, with speed of convergence established in \cite{hu2018convergence}.
For the symmetric case,  examples are known when the method does not converge at all \cite{chen2009simax,kofidis2002simax}; \textcolor{darkgreen}{the convergence has been proven only for very special cases (e.g., orthogonally decomposable tensors, see \cite{ZhaGol} and \cite[Theorem 4.1]{Anandkumar2014}.})

In this paper, we first show that if a vector in the symmetric decomposition of a symmetric tensor is an eigenvector, then for sufficiently high orders,  it is robust, i.e., it is an attracting fixed point of the tensor power method (see \Cref{mainTheorem}). \textcolor{darkgreen}{This allows one to reliably recover the eigenvectors of a tensor using the tensor power method.} We then exhibit several families of tensors whose symmetric decomposition consists of (robust) eigenvectors. Generalizing the class of orthogonally decomposable tensors, we study tensors generated by linear combinations of tensor powers of vectors which form an {\em equiangular tight frame} (\textit{ETF}), or, more generally, an \textit{equiangular set} (\textit{ES}). \textcolor{darkgreen}{ESs and ETFs are generalizations of orthonormal sets of vectors and are important objects in applied functional and harmonic analysis.}

The rest of the paper is organized as follows. In Section~\ref{backgroundSection} we provide background on tensor decompositions and the tensor power method. In Section~\ref{mainTheoremSection} we present our main result, \Cref{mainTheorem}. In Section~\ref{examples} we introduce and provide a detailed study of ETF and ES decomposable tensors; this includes the study of not only eigenvectors and their robustness, but also regions of convergence of the tensor power method. In Section~\ref{conclusionSection} we conclude with a discussion and some open problems.

\section{Background}\label{backgroundSection}
Denote by $[n]$ the set $\{1,\ldots,n\}$. We write unbolded lowercase letters for scalars in $\mathbb{F} = \mathbb{R}, \mathbb{C}$, such as $\lambda$, bolded lowercase letters for vectors, such as $\mathbf{v}$, bolded uppercase letters for matrices, such as $\mathbf{M}$, and script letters for $d$-tensors where $d \geq 3$, such as $\mathcal{T}$. An order $d$ tensor with dimensions $n_1,\ldots,n_d$ is an element $\mathcal{T} \in \mathbb{F}^{n_1 \times \cdots \times n_d}$. The $(i_1,\ldots,i_d)$-th entry of the tensor $\mathcal{T}$ will be denoted by $\mathcal{T}_{i_1,\ldots,i_d}$ where $i_1 \in [n_1],\ldots,i_d \in [n_d]$. An order $d$ tensor $\mathcal{T} \in \mathbb{F}^{n \times \cdots \times n}$ is said to be \textit{symmetric} if for all permutations $\sigma \in S_d$
of~$[d]$,
$$\mathcal{T}_{i_1,\ldots,i_d} = \mathcal{T}_{i_{\sigma(1)},\ldots,i_{\sigma(d)}}.$$
We denote the set of all symmetric tensors of order $d$ and dimension $n$ by $S^d(\mathbb{F}^n)$.
\begin{definition}\label{symmetricDecompositionDef}
A \textit{symmetric decomposition} of a symmetric tensor $\mathcal{T}\in S^d(\mathbb{F}^n)$ is an expression of $\mathcal{T}$ of the form
\begin{equation}\label{tensorGeneration}
    \mathcal{T} = \sum_{i=1}^r \lambda_i \mathbf{v}_i^{\otimes d},
\end{equation}
where $\lambda_1,\dots, \lambda_r\in \mathbb{F}$, $\mathbf{v}_1,\dots,\mathbf{v}_r\in\mathbb{F}^d$ are unit-norm vectors, and $\mathbf{v}^{\otimes d} = \mathbf{v}\otimes\cdots\otimes \mathbf{v}$ ($d$-times) is a \textit{symmetric rank-one tensor}. We say that $\mathcal{T}$ is \textit{generated} by the vectors $\mathbf{v}_1,\ldots,\mathbf{v}_r$ and the coefficients $\lambda_1,\ldots,\lambda_r$. The smallest $r$ for which such a decomposition exists is called the \textit{symmetric rank} of $\mathcal{T}$.
\textcolor{darkgreen}{In what follows we use the shorthand notation}
\[
\textcolor{darkgreen}{\mathbf{V} = \begin{pmatrix} | & & |\\
\mathbf{v}_1 & \cdots & \mathbf{v}_r\\
| & & |
\end{pmatrix}}
\]
\textcolor{darkgreen}{whenever we deal with the symmetric decomposition \eqref{tensorGeneration}.}
\end{definition}
The rank of any \textcolor{darkgreen}{$n\times n$} symmetric matrix is always at most $n$. For tensors $\mathcal{T} \in S^d(\mathbb{C}^n)$, this is not the case since the rank can be much larger. The Alexander-Hirschowitz Theorem \cite{BRAMBILLA20081229} states that, with probability $1$, the symmetric rank of a random tensor $\mathcal{T} \in S^d(\mathbb{C}^n)$  (drawn from an absolutely continuous probability distribution) is $\lfloor \frac1n\binom{n+d-1}d\rfloor$  except for a few special values of $d$ and $n$ where the rank is $1$ more than this number.

A vector $\mathbf{v} \in \mathbb{F}^n$ is an \textit{eigenvector} of $\mathcal{T}$ with \textit{eigenvalue} $\mu \in \mathbb{F}$ if
$$\mathcal{T} \cdot \mathbf{v}^{d-1} = \mu \mathbf{v},$$
where $\mathcal{T} \cdot \mathbf{v}^{d-1}$ is a vector defined by \textit{contracting} $\mathcal{T}$ by $\mathbf{v}$ along all of its modes except for one, i.e. the $i$-th entry of $\mathcal{T} \cdot \mathbf{v}^{d-1}$ is

$$(\mathcal{T} \cdot \mathbf{v}^{d-1})_i = \sum_{i_1,\dots, i_{d-1}=1}^n \mathcal{T}_{i_1\dots i_{d-1} i}{v}_{i_1} \cdots {v}_{i_{d-1}}.$$
Since $\mathcal{T}$ is symmetric, it does not matter which $d-1$ modes of $\mathcal{T}$ we contract. The eigenvectors of $\mathcal{T}$ are the fixed points (up to sign) of an \textcolor{darkgreen}{iterative} method called the \textit{tensor power method} \textcolor{darkgreen}{ (introduced in \cite{DeLathauwer1995},
\cite[Ch. 5]{DeLathauwer1997} for symmetric and non-symmetric case)}, given by
$$\mathbf{x}_{\textcolor{darkgreen}{k}} \mapsto  \textcolor{darkgreen}{\mathbf{x}_{k+1} = }  \frac{\mathcal{T} \cdot \mathbf{x}_k^{d-1}}{\|\mathcal{T} \cdot \mathbf{x}_k^{d-1}\|}.$$
Yet another important characterization of the eigenvectors is that they are the critical points of the symmetric best rank-one approximation problem:
\begin{equation}\label{rankOneApprox}
\min_{c, \mathbf{v}} \|\mathcal{T}  - c \mathbf{v}^{\otimes d}\|^2_F,
\end{equation}
see e.g., \cite[\S6]{chen2009simax} for a related discussion.
\textcolor{darkgreen}{In particular, some of the eigenvectors can be numerically found by the conventional optimization-bazed algorithms for rank-one approximation of tensors.}
Note that while it is known that a non-symmetric best rank-one approximation of a symmetric tensor can be always chosen symmetric \cite{friedland2013best}, this does not give us information about \emph{all}  critical points; in particular the results about the convergence of the non-symmetric power method \cite{uschmajew2015pjo} cannot be applied.

We call the vector $\mathbf{x}_0$ an \textit{initializing vector} of the tensor power method. Note that we are interested in real tensors $\mathcal{T}$ and their real eigenvectors and investigate the convergence behavior of the tensor power method for real, non-zero initializing vectors. We are also not interested in eigenvectors of $\mathcal{T}$ that have eigenvalue $0$, since in that case, the tensor power method is not applicable. A \textit{robust eigenvector} of $\mathcal{T}$ is an eigenvector $\mathbf{v}$ that is an attracting fixed point of the tensor power method, i.e. there exists an $\epsilon > 0$ such that the tensor power method converges to $\mathbf{v}$ for all initializing vectors $\mathbf{x}_0 \in B_{\epsilon}(\mathbf{v})$ in the ball of radius $\epsilon$ centered at $\mathbf{v}$. This means that an eigenvector is robust if it can be reliably obtained from the tensor power method.

A tensor $\mathcal{T}$ is said to be \textit{orthogonally decomposable}, or \textit{odeco}, if it has a symmetric decomposition of the form
\begin{equation}\label{odecoTensor}
    \mathcal{T} = \sum_{i=1}^n\lambda_i \mathbf{v}_i^{\otimes d},
\end{equation}
where $\mathbf{v}_1,\ldots,\mathbf{v}_n$ form an orthonormal basis of $\mathbb{R}^n$. Since there are at most $n$ orthogonal vectors in $\mathbb{R}^n$, the symmetric rank of an odeco tensor is at most $n$. Odeco tensors have been thoroughly characterized and display a number of remarkable properties~\cite{Anandkumar2014,BDHR, Robeva, RobevaSeigal}. One, of interest here, is that the robust eigenvectors of an odeco tensor $(\ref{odecoTensor})$ are precisely $\mathbf{v}_1,\ldots,\mathbf{v}_n$ \cite{Anandkumar2014}.

This brings a few important points. In general, a symmetric tensor with a symmetric decomposition $(\ref{tensorGeneration})$, firstly, may not have $\mathbf{v}_j$ as an eigenvector for some $j$, secondly, may have eigenvectors that are not robust, and lastly, may have robust eigenvectors that are not one of the $\mathbf{v}_j$'s. In comparison, the only robust \textcolor{darkgreen}{eigenvector} of a generic symmetric matrix is the one whose eigenvalue is largest in absolute value. Additionally, unlike symmetric matrices, symmetric tensors can have several robust eigenvectors.

In the following Section~\ref{mainTheoremSection}, we present our main result which essentially says that if a term $\mathbf{v}_i^{\otimes d}$ is part of the symmetric decomposition~$(\ref{tensorGeneration})$ of a symmetric tensor $\mathcal T$, then, for $d$ sufficiently large, if $\mathbf{v}_i$ is an eigenvector, then it is robust. In Section~\ref{examples}, we introduce a family of tensors, called \textit{equiangular tensors}, which generalize odeco tensors. These tensors share the property that the vectors $\mathbf{v}_i$ in the symmetric decomposition $(\ref{tensorGeneration})$ are eigenvectors. We apply our main result to study the robustness of these eigenvectors. We leave the study of robust eigenvectors that do not generate the decomposition of the symmetric tensor as an open problem in the conclusion.

\section{Main Theorem}\label{mainTheoremSection}
We now proceed to our main result which gives a condition on when an eigenvector is robust.
\begin{theorem}\label{mainTheorem}
For $d \in \mathbb{N}$, let $\mathcal{T}_d \in S^d(\mathbb{R}^n)$ be a tensor with symmetric \textcolor{darkgreen}{(not necessarily minimal)} decomposition
\begin{equation}{\label{rank1}}
    \mathcal{T}_d = \sum_{i = 1}^r \lambda_i\mathbf{v}_i^{\otimes{d}},
\end{equation}
with $\|\mathbf{v}_i\| = 1$ for all $i$. Then there exists a $D \in \mathbb{N}$ such that for all $d \geq D$, if $\mathbf{v}_j$ is an eigenvector of $\mathcal{T}_d$ with \textcolor{darkgreen}{a} non-zero eigenvalue, then $\mathbf{v}_j$ is a robust eigenvector of $\mathcal{T}_d$.
\end{theorem}
As we will see in the sections to follow, this result allows us to use the tensor power method in order to decompose certain classes of tensors.

The following lemma is used in the proof of \Cref{mainTheorem}.
\begin{lemma}\textup{\cite[Theorem 3.5]{Rheinboldt1998}}\label{powermethodtheorem}
Let $\mathbf{x_*} \in \mathbb{R}^n$ be a fixed point of a $C^1(U,\mathbb{R}^n)$ function $\phi:~U\rightarrow~\mathbb{R}^n$ where $U \subseteq \mathbb{R}^n$ is an open set, and let $\mathbf{J}: U \rightarrow \mathbb{R}^{n \times n}$ be the Jacobian matrix of $\phi$. Then $\mathbf{x_*}$ is an attracting fixed point of the iterative method $\mathbf{x}_{k+1} = \phi(\mathbf{x}_k)$ if $\rho(\mathbf{J}(\mathbf{x_*})) < 1$, where $\rho(\mathbf{J}(\mathbf{x_*}))$ is the spectral radius of the matrix $\mathbf{J}(\mathbf{x_*})$. Furthermore, if $\rho(\mathbf{J}(\mathbf{x_*})) > 0$, then for $\mathbf{x}_0$ sufficiently close to $\mathbf{x_*}$, the rate of convergence of this iterative method is linear.
\end{lemma}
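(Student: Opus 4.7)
The plan is to prove this as a standard Ostrowski-type local convergence theorem in three stages: adapt the ambient norm so that $\mathbf{J}(\mathbf{x_*})$ has operator norm strictly less than $1$, use the mean value inequality on a small ball to turn $\phi$ into a genuine contraction there, and then read off the exact asymptotic rate from the linearization at $\mathbf{x_*}$.

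The key analytic input is the classical fact that for every square matrix $\mathbf{A}$ and every $\eta>0$, there exists a vector norm $\|\cdot\|_\eta$ on $\mathbb{R}^n$ whose induced operator norm satisfies $\|\mathbf{A}\|_\eta \leq \rho(\mathbf{A})+\eta$. I would prove this by writing $\mathbf{A}$ in (real) Jordan form $\mathbf{A}=\mathbf{P}\mathbf{J}\mathbf{P}^{-1}$, conjugating $\mathbf{J}$ by a diagonal scaling $\mathrm{diag}(1,\eta,\eta^2,\ldots)$ block-wise to shrink the off-diagonal $1$'s in each Jordan block down to $\eta$, and taking $\|\mathbf{x}\|_\eta := \|\mathbf{D}^{-1}\mathbf{P}^{-1}\mathbf{x}\|_\infty$ for the corresponding diagonal $\mathbf{D}$. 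Applied with $\mathbf{A}=\mathbf{J}(\mathbf{x_*})$ and $\eta$ small enough that $q := \rho(\mathbf{J}(\mathbf{x_*}))+\eta < 1$, this yields a norm on $\mathbb{R}^n$ in which the derivative at the fixed point is a strict contraction.

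Since $\phi\in C^1(U,\mathbb{R}^n)$, the map $\mathbf{x}\mapsto \mathbf{J}(\mathbf{x})$ is continuous, hence so is $\mathbf{x}\mapsto \|\mathbf{J}(\mathbf{x})\|_\eta$. Choose any $q'\in(q,1)$; there is a closed ball $\overline{B_\epsilon(\mathbf{x_*})}\subseteq U$ on which $\|\mathbf{J}(\mathbf{x})\|_\eta \leq q'$. On this convex set the fundamental theorem of calculus gives
$$\phi(\mathbf{x}) - \phi(\mathbf{x_*}) \,=\, \left(\int_0^1 \mathbf{J}\bigl(\mathbf{x_*}+t(\mathbf{x}-\mathbf{x_*})\bigr)\,dt\right)(\mathbf{x}-\mathbf{x_*}),$$
and combining with $\phi(\mathbf{x_*})=\mathbf{x_*}$ yields $\|\phi(\mathbf{x})-\mathbf{x_*}\|_\eta \leq q'\|\mathbf{x}-\mathbf{x_*}\|_\eta$ throughout the ball. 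Thus $\phi$ sends $\overline{B_\epsilon(\mathbf{x_*})}$ into itself as a $q'$-contraction in $\|\cdot\|_\eta$, and induction gives $\|\mathbf{x}_k-\mathbf{x_*}\|_\eta \leq (q')^k\|\mathbf{x}_0-\mathbf{x_*}\|_\eta$. Equivalence of all norms on $\mathbb{R}^n$ translates this into Euclidean convergence, showing $\mathbf{x_*}$ is an attracting fixed point.

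For the linear rate when $\rho:=\rho(\mathbf{J}(\mathbf{x_*}))>0$, the upper estimate above already shows $\limsup_k \|\mathbf{x}_k-\mathbf{x_*}\|^{1/k}\leq \rho+\eta$ for every $\eta>0$, hence $\leq \rho$. To see the rate is genuinely linear and not faster, I would use the $C^1$ Taylor expansion $\phi(\mathbf{x})=\mathbf{x_*}+\mathbf{J}(\mathbf{x_*})(\mathbf{x}-\mathbf{x_*})+o(\|\mathbf{x}-\mathbf{x_*}\|)$ and pick $\mathbf{x}_0-\mathbf{x_*}$ along a real (generalized) eigendirection of $\mathbf{J}(\mathbf{x_*})$ associated with an eigenvalue of modulus $\rho$. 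An induction then shows $\mathbf{x}_k-\mathbf{x_*}=\mathbf{J}(\mathbf{x_*})^k(\mathbf{x}_0-\mathbf{x_*})+o(\rho^k)$, so the iterates shrink at rate exactly $\rho>0$. The main obstacle is the very first step: the Jordan-form construction of a norm pinned arbitrarily close to the spectral radius requires genuine care, especially over $\mathbb{R}$ where one must use the real Jordan form to handle complex conjugate eigenvalue pairs; once it is in hand, the contraction-mapping and linearization steps are routine.
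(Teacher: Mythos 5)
The paper does not prove this lemma at all---it is quoted from Rheinboldt's text as a known result---so the only benchmark is the standard Ostrowski argument, and your proof is exactly that argument (a Householder-type norm with $\|\mathbf{J}(\mathbf{x_*})\|_\eta\le\rho+\eta<1$ built from the Jordan form, the mean-value/contraction estimate on a small ball in that norm, and linearization for the rate), carried out correctly. The one soft spot is the final induction asserting $\mathbf{x}_k-\mathbf{x_*}=\mathbf{J}(\mathbf{x_*})^k(\mathbf{x}_0-\mathbf{x_*})+o(\rho^k)$: the accumulated $o(\|\mathbf{e}_j\|)$ perturbations only give an error of order $o\bigl((\rho+\eta)^k\bigr)$ without a more careful Perron--Ostrowski refinement, but this ``not faster than linear'' direction is not actually needed, since the linear-rate claim already follows from your geometric bound $\|\mathbf{x}_k-\mathbf{x_*}\|_\eta\le (q')^k\|\mathbf{x}_0-\mathbf{x_*}\|_\eta$ together with $\rho>0$.
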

We will also need the following lemma about the structure of the Jacobian matrix of the tensor power method iteration.
\begin{lemma}\label{jacobianLemma}
Let $\mathcal{T}_d \in S^d(\mathbb{R}^n)$ and let $\phi: U \rightarrow \mathbb{R}^n$ be the tensor power method iteration map
\begin{equation}\label{phiFormula_general}
\phi(\mathbf{x}) = \frac{\mathcal{T}_d \cdot \mathbf{x}^{d-1}}{\|\mathcal{T}_d \cdot \mathbf{x}^{d-1}\|}
\end{equation}
where $U \subseteq \mathbb{R}^n$ is an open set. Assume that the vector $\mathbf{v} \in \mathbb{R}^n$ is a unit-norm eigenvector of $\mathcal{T}_d$ with non-zero eigenvalue $\mu \in \mathbb{R}$. Then the Jacobian matrix of $\phi$ at $\mathbf{v}$, $\mathbf{J}(\mathbf{v})$, is symmetric and has the following form:
$$\mathbf{J}(\mathbf{v}) = \frac{(d-1)}{\mu}\left(\mathcal{T} \cdot \mathbf{v}^{d-2} -  \mu \mathbf{v}\mathbf{v}^{\top}\right).$$
\end{lemma}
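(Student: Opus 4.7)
The plan is to view $\phi$ as the normalization of $\mathbf{f}(\mathbf{x}) := \mathcal{T}_d \cdot \mathbf{x}^{d-1}$ and apply the chain and quotient rules. The first step is to compute the Jacobian of $\mathbf{f}$ itself: writing $f_i(\mathbf{x}) = \sum_{i_1,\ldots,i_{d-1}} (\mathcal{T}_d)_{i_1 \cdots i_{d-1} i}\, x_{i_1} \cdots x_{i_{d-1}}$, the product rule produces $d-1$ terms, each indexed by which factor $x_{i_\ell}$ is being differentiated. By the full symmetry of $\mathcal{T}_d$ these terms coincide after relabeling, giving
\[
\mathbf{J}_{\mathbf{f}}(\mathbf{x}) \;=\; (d-1)\,\mathcal{T}_d \cdot \mathbf{x}^{d-2}.
\]
Symmetry of $\mathcal{T}_d$ also ensures that this matrix is itself symmetric at every $\mathbf{x}$, which I will use at the end.

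Next, applying the chain rule to $g(\mathbf{x}) := \|\mathbf{f}(\mathbf{x})\| = \sqrt{\mathbf{f}^\top\mathbf{f}}$ gives $\nabla g = \mathbf{J}_{\mathbf{f}}^\top \mathbf{f} / g$, and then the quotient rule on $\phi = \mathbf{f}/g$ yields the compact projector form
\[
\mathbf{J}_\phi(\mathbf{x}) \;=\; \frac{1}{g(\mathbf{x})}\left(I - \frac{\mathbf{f}(\mathbf{x})\mathbf{f}(\mathbf{x})^\top}{g(\mathbf{x})^2}\right)\mathbf{J}_{\mathbf{f}}(\mathbf{x}).
\]
I then substitute $\mathbf{x} = \mathbf{v}$: the eigenvector hypothesis $\mathcal{T}_d \cdot \mathbf{v}^{d-1} = \mu \mathbf{v}$ together with $\|\mathbf{v}\| = 1$ gives $\mathbf{f}(\mathbf{v}) = \mu\mathbf{v}$ and $g(\mathbf{v}) = |\mu|$, so the rank-one projector collapses to $I - \mathbf{v}\mathbf{v}^\top$. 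Setting $\mathbf{M} := \mathcal{T}_d \cdot \mathbf{v}^{d-2}$, the eigenvector equation reads $\mathbf{M}\mathbf{v} = \mu\mathbf{v}$, and symmetry of $\mathbf{M}$ then yields $\mathbf{v}^\top \mathbf{M} = \mu\mathbf{v}^\top$. Hence $(I-\mathbf{v}\mathbf{v}^\top)\mathbf{M} = \mathbf{M} - \mu\mathbf{v}\mathbf{v}^\top$, which is a difference of two symmetric matrices and so symmetric.

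Combining the pieces yields $\mathbf{J}_\phi(\mathbf{v}) = \frac{d-1}{|\mu|}\bigl(\mathcal{T}_d\cdot\mathbf{v}^{d-2} - \mu\mathbf{v}\mathbf{v}^\top\bigr)$, matching the stated formula under the natural sign convention $\mu>0$ (which holds precisely when $\mathbf{v}$ is a genuine fixed point of $\phi$); the overall sign is in any case irrelevant downstream, since only the spectral radius $\rho(\mathbf{J}_\phi(\mathbf{v}))$ enters Lemma~\ref{powermethodtheorem}. The whole argument is a routine differential-calculus computation, so I anticipate no real obstacle; the only delicate point is the relabeling in the product-rule step, where symmetry of $\mathcal{T}_d$ is what allows the $d-1$ partial-derivative terms to be combined into the single factor $(d-1)\,\mathcal{T}_d\cdot\mathbf{x}^{d-2}$ and simultaneously guarantees the symmetry of the resulting Jacobian.
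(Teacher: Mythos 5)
Your proof is correct and follows essentially the same route as the paper's: factor $\phi$ into the contraction map followed by normalization, compute the two Jacobians ($ (d-1)\,\mathcal{T}_d\cdot\mathbf{x}^{d-2}$ and the projector-type derivative of the normalization), apply the chain rule, and evaluate at the eigenvector using $\mathcal{T}_d\cdot\mathbf{v}^{d-1}=\mu\mathbf{v}$. Your explicit handling of the $|\mu|$ versus $\mu$ sign (which the paper glosses over by writing $\mu^3$ for $\|\mathcal{T}\cdot\mathbf{v}^{d-1}\|^3$) and of the symmetry claim is a slightly more careful rendering of the same argument.
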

\begin{proof}
Denote $\phi_1(\mathbf{x}) = \frac{\mathbf{x}}{\left(\mathbf{x}^\top \mathbf{x}\right)^{\frac{1}{2}}}$ and $\phi_2(\mathbf{x}) = \mathcal{T}_d \cdot \mathbf{x}^{d-1}$ so that $\phi(\mathbf{x}) = \phi_1(\phi_2(\mathbf{x}))$. Then

$$\phi_1^\prime(\mathbf{x}) = \frac{\mathbf{x}^\top\mathbf{x} \mathbf{I}_{n \times n} - \mathbf{x}\mathbf{x}^\top}{\left(\mathbf{x}^\top\mathbf{x}\right)^{\frac{3}{2}}}$$
and
$$\phi_2^\prime(\mathbf{x}) = (d-1)\left(\mathcal{T} \cdot \mathbf{x}^{d-2}\right).$$
Next, we can express
$$\phi_1^\prime(\phi_2(\mathbf{x})) =
\frac{\|\mathcal{T} \cdot \mathbf{x}^{d-1}\|^2 \mathbf{I}_{n\times n} - (\mathcal{T} \cdot \mathbf{x}^{d-1})(\mathcal{T} \cdot \mathbf{x}^{d-1})^{\top}}{ (\|\mathcal{T} \cdot \mathbf{x}^{d-1}\|)^{3}}$$
and therefore, by the chain rule,
\begin{align*}
\mathbf{J}(\mathbf{x}) &= \phi^\prime(\mathbf{x}) = \phi_1^\prime(\phi_2(\mathbf{x}))\phi_2^\prime(\mathbf{x}) \\
&=  (d-1)\frac{\|\mathcal{T} \cdot \mathbf{x}^{d-1}\|^2(\mathcal{T} \cdot \mathbf{x}^{d-2}) - (\mathcal{T} \cdot \mathbf{x}^{d-1})(\mathcal{T} \cdot \mathbf{x}^{d-1})^{\top}(\mathcal{T} \cdot \mathbf{x}^{d-2})}{(\|\mathcal{T} \cdot \mathbf{x}^{d-1}\|)^{3}}.
\end{align*}
Now let us evaluate the expression at the unit-norm eigenvector $\mathbf{v}$ corresponding to an eigenvalue $\mu \neq 0$ (i.e., satisfying $\mathcal{T} \cdot \mathbf{v}^{d-1} = \mu \mathbf{v}$). Then we have that
$$\mathbf{J}(\mathbf{v}) = 
(d-1)\frac{\mu^2 (\mathcal{T} \cdot \mathbf{v}^{d-2}) -  \mu^2 \mathbf{v} \mathbf{v}^{\top} (\mathcal{T} \cdot \mathbf{v}^{d-2})}{ \mu^{3}} = 
\frac{(d-1)}{\mu}\left(\mathcal{T} \cdot \mathbf{v}^{d-2} -  \mu \mathbf{v}\mathbf{v}^{\top}\right),$$
where we used the fact that $\mathbf{v}^{\top} (\mathcal{T} \cdot \mathbf{v}^{d-2}) = (\mathcal{T} \cdot \mathbf{v}^{d-1})^\top = \mu\mathbf{v}^\top$.
\end{proof}

We now proceed with the proof of our theorem.
\begin{proof}[Proof of \Cref{mainTheorem}]
We may assume that no $\lambda_i$ is $0$ and no two vectors $\mathbf{v}_k$ and $\mathbf{v}_\ell$ are colinear, or else we may rewrite $\mathcal{T}_d$ as a sum of a smaller number $r$ of symmetric rank-1 tensors.

Contracting $\mathcal{T}_d$ with $d-1$ copies of $\mathbf{v}_j$, since $\mathbf{v}_j$ is an eigenvector of $\mathcal{T}_d$ with eigenvalue $\mu_{j,d} \neq 0$, we have
\begin{equation}\label{C-coeff}
\begin{split}
\mathcal{T}_d \cdot \mathbf{v}_j^{d-1}& = \sum_{i = 1}^r \lambda_i\langle \mathbf{v}_i, \mathbf{v}_j \rangle^{d-1} \mathbf{v}_i = \lambda_j\mathbf{v}_j + \sum_{i \in [r]\setminus\{j\}} \lambda_i\alpha_{i,j}^{d-1} \mathbf{v}_i  \\
 &= \mathbf{V}\mathbf{\Lambda}\left(\mathbf{V}^\top\mathbf{v}_j\right)^{\odot(d-1)} = \mu_{j,d}\mathbf{v}_j,\,
\end{split}
\end{equation}
where $\alpha_{i,j} := \langle \mathbf{v}_i, \mathbf{v}_j \rangle$, and contracting $\mathcal{T}_d$ with $d-2$ copies of $\mathbf{v}_j$, we have
\[
\mathcal{T}_d \cdot \mathbf{v}_j^{d-2} = \sum_{i = 1}^r \lambda_i \langle \mathbf{v}_i, \mathbf{v}_j \rangle^{d-2} \mathbf{v}_i\mathbf{v}_i^\top = \mathbf{V}\mathbf{\Lambda}\mathbf{D}(\textcolor{darkgreen}{\mathbf{v}_j})\mathbf{V}^\top,
\]
where $\mathbf{\Lambda} = \text{diag}\left(\lambda_1,\ldots,\lambda_r\right)$, $\mathbf{D}(\mathbf{x}) = \text{diag}\left((\mathbf{V}^\top\mathbf{x})^{\odot(d-2)}\right)$, and $\mathbf{x}^{\odot m} = (x_1^m,\ldots,x_n^m)^\top$ is the $m$-th Hadamard power of the vector $\mathbf{x}$. Hence, by \Cref{jacobianLemma}, we have that the Jacobian matrix of $\phi$ at $\mathbf{v}_j$ is
\begin{equation}\label{jacobianForm}
\mathbf{J}(\mathbf{v}_j) = \frac{d-1}{\mu_{j,d}} \left(\mathbf{V}\mathbf{\Lambda}\mathbf{D}(\mathbf{v}_j)\mathbf{V}^\top - \mu_{j,d}\mathbf{v}_{j}\mathbf{v}_{j}^{\top}\right).
\end{equation}
Now multiplying both sides of \eqref{C-coeff} by $\mathbf{v}_j^\top$ on the right, we have
$$\lambda_j\mathbf{v}_j\mathbf{v}_j^\top + \sum_{i \in [r]\setminus\{j\}} \lambda_i\alpha_{i,j}^{d-1} \mathbf{v}_i\mathbf{v}_j^\top = \mu_{j,d}\mathbf{v}_j\mathbf{v}_j^\top,$$
and hence
\begin{equation}\label{substitute-outer}
    \lambda_j\mathbf{v}_j\mathbf{v}_j^\top - \mu_{j,d}\mathbf{v}_j\mathbf{v}_j^\top = - \sum_{i \in [r]\setminus\{j\}} \lambda_i\alpha_{i,j}^{d-1} \mathbf{v}_i\mathbf{v}_j^\top.
\end{equation}
Next, we are going to bound the spectral radius of $\mathbf{J}(\mathbf{v}_j)$, which is equal to $\|\mathbf{J}(\mathbf{v}_j)\|_2$ because $\mathbf{J}(\mathbf{v}_j)$ is symmetric. Due to \eqref{substitute-outer}, we can express
\begin{align*}
\mathbf{V}\mathbf{\Lambda}\mathbf{D}(\mathbf{v}_j)\mathbf{V}^\top  - 
\mu_{j,d}\mathbf{v}_{j} \mathbf{v}^{\top}_{j} & = \sum_{i \in [r]} \lambda_i\alpha_{i,j}^{d-2}\mathbf{v}_i\mathbf{v}_i^\top - \mu_{j,d}\mathbf{v}_{j} \mathbf{v}^{\top}_{j} \\
&= \sum_{i \in [r]\setminus\{j\}} \lambda_i\alpha_{i,j}^{d-2}\mathbf{v}_i\mathbf{v}_i^\top - \sum_{i \in [r]\setminus\{j\}} \lambda_i\alpha_{i,j}^{d-1} \mathbf{v}_i\mathbf{v}_j^\top\\
 &= 
\sum_{i \in [r]\setminus\{j\}} \lambda_i\alpha_{i,j}^{d-2}\mathbf{v}_i(\mathbf{v}_i- \alpha_{i,j} \mathbf{v}_j)^\top \\
&=\left(\sum_{i \in [r]\setminus\{j\}} \lambda_i\alpha_{i,j}^{d-2}\mathbf{v}_i\mathbf{v}^{\top}_i\right) (\mathbf{I}- \mathbf{v}_j \mathbf{v}_j^\top).
\end{align*}
Therefore, since $\mathbf{I}- \mathbf{v}_j \mathbf{v}_j^\top$ is an orthogonal projector with $\|\mathbf{I}- \mathbf{v}_j \mathbf{v}_j^\top\|_2 = 1$, and by submultiplicativity of the spectral norm, we get
\begin{align}
\rho(\mathbf{J}(\mathbf{v}_j)) &= \left|\frac{d-1}{\mu_{j,d}}\right| \|\mathbf{V}\mathbf{\Lambda}\mathbf{D}(\mathbf{v}_j)\mathbf{V}^\top  - 
\mu_{j,d}\mathbf{v}_{j} \mathbf{v}^{\top}_{j}\|_2 \le \left|\frac{d-1}{\mu_{j,d}}\right| \left\|\sum_{i \in [r]\setminus\{j\}} \lambda_i\alpha_{i,j}^{d-2}\mathbf{v}_i\mathbf{v}^{\top}_i\right\|_2  \label{generalSpectralBound} \\ \nonumber
&\le \left|\frac{d-1}{\mu_{j,d}}\right| {\sum_{i \in [r]\setminus\{j\}}}  \left\|\lambda_i\alpha_{i,j}^{d-2}\mathbf{v}_i\mathbf{v}^{\top}_i\right\|_2 = \left|\frac{d-1}{\mu_{j,d}}\right| {\sum_{i \in [r]\setminus\{j\}}}  |\lambda_i\alpha_{i,j}^{d-2}| \\ \nonumber
& \le   \left|\frac{d-1}{\mu_{j,d}}\right|(r-1)\left(\max_{i \in [r]\setminus\{j\}}\left|\lambda_i\right|\right)\left(\max_{i \in [r]\setminus\{j\}}\left|\alpha_{i,j}\right|\right)^{d-2},
\end{align}
where we used the triangle inequality and the Cauchy-Schwarz inequalities. 

Note that since no two vectors $\mathbf{v}_k$ and $\mathbf{v}_\ell$ are colinear, $|\alpha_{i,j}| < 1$ for all $i \in [r]\setminus\{j\}$. Therefore, rearranging $(\ref{C-coeff})$ and applying the triangle inequality,
$$|\mu_{j,d} - \lambda_j| \leq \sum_{i \in [r]\setminus\{j\}} |\lambda_i||\alpha_{i,j}|^{d-1},$$
we see that $\mu_{j,d}$ converges to $\lambda_j$ as $d$ becomes large, \textcolor{darkgreen}{thus the denominator in \eqref{generalSpectralBound} is separated from $0$}. This, \textcolor{darkgreen}{together with the fact that $|\alpha_{i,j}| < 1$ for $i \neq j$, shows that  $\rho(\mathbf{J}(\mathbf{v}_j)) \to 0$ and hence is less than $1$ for sufficiently large $d$ and hence}  the result follows by \Cref{powermethodtheorem}.
\end{proof}

\section{Equiangular Tensors}\label{examples}
The main result \Cref{mainTheorem} compels us to find sufficient conditions for when a generating vector of a symmetric tensor $\mathcal{T}$ in $(\ref{tensorGeneration})$ is an eigenvector. We will see that \textcolor{darkgreen}{a tensor} $\mathcal{T}$ generated by a certain class of vectors will have this property.
\subsection{Equiangular sets}
\begin{definition}
An \textit{equiangular set} (\textit{ES}) is a collection of vectors $\mathbf{v}_1,\ldots,\mathbf{v}_r \in \mathbb{R}^n$ with $r \geq n$ \textcolor{darkgreen}{for which} there exists $
\alpha\in\mathbb R$ such that
\begin{equation}\label{alphaEquation}
    \alpha = |\langle \mathbf{v}_i, \mathbf{v}_j \rangle|, \,\forall i\neq j \quad\quad\text{ and } \quad\quad\|\mathbf v_i\|=1, \,\forall i.
\end{equation}
\end{definition}
Note that if $\mathbf{v}_1,\ldots,\mathbf{v}_r$ form an ES, and if $\sigma_{i,j} = \text{sgn}(\langle \mathbf{v}_i, \mathbf{v}_j \rangle) \in \{1,-1\}$ for $i \neq j$, then 
\begin{equation}\label{VtV}
    \begin{pmatrix} - & \mathbf{v}_1 & -\\
& \vdots & \\
- & \mathbf{v}_r & -
\end{pmatrix}\begin{pmatrix} | & & |\\
\mathbf{v}_1 & \cdots & \mathbf{v}_r\\
| & & |
\end{pmatrix} = \begin{pmatrix}
1 & \sigma_{1,2}\alpha & \hdots & \sigma_{1,r-1} \textcolor{darkgreen}{\alpha} & \sigma_{1,r}\alpha \\
\sigma_{2,1}\alpha & 1 & \hdots & \sigma_{2,r-1} \textcolor{darkgreen}{\alpha} & \sigma_{2,r}\alpha \\
\vdots & \vdots & \ddots & \vdots & \vdots \\
\sigma_{r-1,1}\alpha & \sigma_{r-1,2}\alpha & \hdots & 1 & \sigma_{r-1,r}\alpha \\
\sigma_{r,1}\alpha & \sigma_{r,2}\alpha & \hdots & \sigma_{r,r-1}\alpha & 1
\end{pmatrix}.
\end{equation}
ESs correspond to sets of lines in $\mathbb{R}^n$ passing through the origin such that the angle between every pair of lines is the same. Determining the maximum number of equiangular lines in $\mathbb{R}^n$ for each $n$ is an old problem that has recently seen significant progress by \cite{equiangularLineBEST}, who determined an asymptotically tight upper bound.

We call a tensor $(\ref{tensorGeneration})$ generated by vectors from an ES \textit{equiangular set decomposable}, or \textit{equiangular} for short. We begin with some general results on equiangular tensors.
\begin{theorem}\label{kernelConditionLemma}
Let $\mathcal{T}$ be a tensor generated by an ES $\mathbf{v}_1,\ldots,\mathbf{v}_r \in \mathbb{R}^n$ and coefficients $\lambda_1,\ldots,\lambda_r \in \mathbb{R}$. If for some $j\in[r]$, there exists $\textcolor{darkgreen}{C_j} \in \mathbb{R}$ such that
\begin{equation}\label{kernelCondition}
    (\lambda_1\sigma_{1,j}^{d-1},\ldots,\lambda_{j-1}\sigma_{j-1,j}^{d-1},\textcolor{darkgreen}{C_j},\lambda_{j+1}\sigma_{j+1,j}^{d-1},\ldots,\lambda_r\sigma_{r,j}^{d-1}) \in Ker(\mathbf{V})
\end{equation}
where $\mathbf{V} \in \mathbb{R}^{n \times r}$ is the matrix whose columns are $\mathbf{v}_1,\ldots,\mathbf{v}_r$, then $\mathbf{v}_j$ is an eigenvector. 
In particular, if $d$ is odd and $(\lambda_1,\ldots, \lambda_r)\in Ker(\mathbf V)$, then, all of $\mathbf v_1,\ldots, \mathbf v_r$ are eigenvectors of $\mathcal T$.
Furthermore, in this case, all of these vectors are robust eigenvectors if
\begin{equation}\label{lambdasInKernelBound}
    \frac{\|\mathbf{V}\textcolor{darkgreen}{\mathbf{\Lambda}}\mathbf{V}^\top\|_2\alpha^{d-2}(d-1)}{\left(\min_{i \in [r]} |\lambda_i|\right)(1 - \alpha^{d-1})} < 1,
\end{equation}
which always holds when $d$ is large enough,  \textcolor{darkgreen}{where $\mathbf{\Lambda} = \text{diag}\left(\lambda_1,\ldots,\lambda_r\right)$.}
\end{theorem}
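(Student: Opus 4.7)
The plan is to unroll the contraction $\mathcal{T}\cdot\mathbf{v}_j^{d-1}$ directly using the ES structure and then feed the result into the machinery from the proof of Theorem~\ref{mainTheorem}. First, separating the $i=j$ term and using $\langle \mathbf{v}_i,\mathbf{v}_j\rangle = \sigma_{i,j}\alpha$ for $i\neq j$ will give
\[
\mathcal{T}\cdot\mathbf{v}_j^{d-1} = \lambda_j\mathbf{v}_j + \alpha^{d-1}\sum_{i\neq j}\lambda_i\sigma_{i,j}^{d-1}\mathbf{v}_i.
\]
The kernel hypothesis \eqref{kernelCondition} is precisely that $\sum_{i\neq j}\lambda_i\sigma_{i,j}^{d-1}\mathbf{v}_i = -\mu_j\mathbf{v}_j$, so substituting yields $\mathcal{T}\cdot\mathbf{v}_j^{d-1} = (\lambda_j - \alpha^{d-1}\mu_j)\mathbf{v}_j$. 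This proves the first assertion and identifies the eigenvalue as $\mu_{j,d} = \lambda_j - \alpha^{d-1}\mu_j$.

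For the second assertion, note that when $d$ is odd we have $\sigma_{i,j}^{d-1}=1$, so the hypothesis $(\lambda_1,\ldots,\lambda_r)\in\Ker(\mathbf{V})$, rewritten as $\sum_{i\neq j}\lambda_i\mathbf{v}_i = -\lambda_j\mathbf{v}_j$, is exactly \eqref{kernelCondition} with the choice $\mu_j := \lambda_j$. Hence every $\mathbf{v}_j$ is an eigenvector with eigenvalue $\mu_{j,d} = \lambda_j(1-\alpha^{d-1})$, which is nonzero because $\alpha<1$ (no two $\mathbf{v}_i$ are colinear) and $\lambda_j\neq 0$ (otherwise $\min_i|\lambda_i|=0$ and \eqref{lambdasInKernelBound} is vacuous).

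For robustness, I would recycle the spectral-radius estimate \eqref{generalSpectralBound} developed in the proof of Theorem~\ref{mainTheorem}, namely $\rho(\mathbf{J}(\mathbf{v}_j)) \le |(d-1)/\mu_{j,d}|\,\|\sum_{i\neq j}\lambda_i\alpha_{i,j}^{d-2}\mathbf{v}_i\mathbf{v}_i^\top\|_2$. Substituting $\alpha_{i,j}^{d-2}=\sigma_{i,j}^{d-2}\alpha^{d-2}$ pulls $\alpha^{d-2}$ out of the sum, and the remaining matrix can be rewritten as $\mathbf{V}\widetilde{\mathbf{\Lambda}}\mathbf{V}^\top$, where $\widetilde{\mathbf{\Lambda}}$ is the $r\times r$ diagonal matrix with $j$-th entry $0$ and other entries $\lambda_i\sigma_{i,j}^{d-2}$. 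Submultiplicativity of the spectral norm then gives $\|\mathbf{V}\widetilde{\mathbf{\Lambda}}\mathbf{V}^\top\|_2 \le \|\mathbf{V}\mathbf{V}^\top\|_2\cdot\max_{i\neq j}|\lambda_i|$, and combining this with $|\mu_{j,d}| \ge (\min_i|\lambda_i|)(1-\alpha^{d-1})$ produces the sufficient condition \eqref{lambdasInKernelBound}; Lemma~\ref{powermethodtheorem} then delivers robustness. The main obstacle is precisely this operator-norm estimate: collapsing the signed, $\lambda$-weighted rank-one sum into a clean constant multiple of $\|\mathbf{V}\mathbf{V}^\top\|_2$ is the one nontrivial step, and the $\mathbf{V}\widetilde{\mathbf{\Lambda}}\mathbf{V}^\top$ factorization is what makes it go through. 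The asymptotic claim is immediate: since $0\le\alpha<1$, $(d-1)\alpha^{d-2}\to 0$ while $1-\alpha^{d-1}\to 1$ as $d\to\infty$, so \eqref{lambdasInKernelBound} eventually holds.
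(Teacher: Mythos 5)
Your proof of the two eigenvector claims is exactly the paper's argument: expand $\mathcal{T}\cdot\mathbf{v}_j^{d-1}$, read the kernel hypothesis as $\sum_{i\neq j}\lambda_i\sigma_{i,j}^{d-1}\mathbf{v}_i=-\mu_j\mathbf{v}_j$, and specialize to $\mu_j=\lambda_j$ when $d$ is odd, obtaining the eigenvalue $\lambda_j(1-\alpha^{d-1})$. The robustness step is also the paper's route (plug into \eqref{generalSpectralBound} and invoke Lemma~\ref{powermethodtheorem}), but there is one point of divergence worth flagging: your careful estimate $\bigl\|\sum_{i\neq j}\lambda_i\sigma_{i,j}^{d-2}\mathbf{v}_i\mathbf{v}_i^\top\bigr\|_2=\|\mathbf{V}\widetilde{\mathbf{\Lambda}}\mathbf{V}^\top\|_2\le\bigl(\max_{i\neq j}|\lambda_i|\bigr)\|\mathbf{V}\mathbf{V}^\top\|_2$ yields a spectral-radius bound carrying an extra factor $\max_{i\neq j}|\lambda_i|$ in the numerator, so it does \emph{not} literally produce \eqref{lambdasInKernelBound} unless $\max_i|\lambda_i|\le 1$; asserting that it does is a gap in your write-up. (The paper's own proof silently replaces $\sum_{i\neq j}\lambda_i\sigma_{i,j}^{d-2}\mathbf{v}_i\mathbf{v}_i^\top$ by $\sum_{i\neq j}\mathbf{v}_i\mathbf{v}_i^\top$ inside the norm, which is subject to the same restriction, so your version is in fact the more defensible estimate --- note that \eqref{lambdasInKernelBound} as printed is not invariant under rescaling all $\lambda_i$, whereas the true spectral radius is.) The asymptotic claim is unaffected either way, since $(d-1)\alpha^{d-2}\to 0$ for $\alpha<1$.
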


\begin{proof}
We observe that for $j \in [r]$,
$$\mathcal{T} \cdot \mathbf{v}_j^{d-1} = \sum_{i = 1}^r \lambda_i\langle \mathbf{v}_i, \mathbf{v}_j \rangle^{d-1} \mathbf{v}_i = \lambda_j\mathbf{v}_j + \alpha^{d-1}\sum_{i \in [r]\setminus\{j\}} \lambda_i\sigma_{i,j}^{d-1} \mathbf{v}_i,$$
and hence if $(\ref{kernelCondition})$ holds, then
$$\lambda_j\mathbf{v}_j + \alpha^{d-1}\sum_{i \in [r]\setminus\{j\}} \lambda_i\sigma_{i,j}^{d-1} \mathbf{v}_i = \lambda_j\mathbf{v}_j - \alpha^{d-1}\mu_j\mathbf{v}_j = (\lambda_j - \alpha^{d-1}\mu_j)\mathbf{v}_j.$$
Therefore, $\mathbf v_j$ is an eigenvector of $\mathcal T$.

Now if $d$ is odd and $(\lambda_1,\ldots,\lambda_r) \in Ker(\mathbf{V)}$, we have
\begin{align*}
\mathcal{T} \cdot \mathbf{v}^{d-1}_j &= \sum_{i=1}^{r}\lambda_i\langle \mathbf{v}_i, \mathbf{v}_j\rangle^{d-1}\mathbf{v}_i = \lambda_j\mathbf{v}_j + \sum_{i \in [r]\setminus\{j\}} \lambda_i(\sigma_{i,j}\alpha)^{d-1}\mathbf{v}_i \\
&= \lambda_j\mathbf{v}_j + \alpha^{d-1}\sum_{i \in [r]\setminus\{j\}} \lambda_i\mathbf{v}_i \\
&= \lambda_j\mathbf{v}_{j} + \alpha^{d-1}(-\lambda_j\mathbf{v}_{j}) = \lambda_j(1 - \alpha^{d-1})\mathbf{v}_{j},
\end{align*}
i.e., all of $\mathbf v_1,\ldots, \mathbf v_r$ are eigenvectors. In addition, using inequality \eqref{generalSpectralBound},
\begin{align*}
\rho(\mathbf{J}(\mathbf{v}_j)) &  \le \frac{d-1}{|\lambda_j|(1 - \alpha^{d-1})}  \alpha^{d-2} \left\|\sum_{i \in [r]\setminus\{j\}} \textcolor{darkgreen}{\lambda_i}\mathbf{v}_i\mathbf{v}^{\top}_i\right\|_2 \\
&\leq \frac{d-1}{|\lambda_j|(1 - \alpha^{d-1})}\alpha^{d-2}\|\mathbf{V} \textcolor{darkgreen}{\mathbf{\Lambda}}\mathbf{V}^\top\|_2 \leq \frac{\|\mathbf{V}\textcolor{darkgreen}{\mathbf{\Lambda}}\mathbf{V}^\top\|_2\alpha^{d-2}(d-1)}{\left(\min_{i \in [r]} |\lambda_i|\right)(1 - \alpha^{d-1})}.
\end{align*}
When the above quantity is less than $1$, $\mathbf{v}_j$ is a robust eigenvector, for all $j\in[r]$.
\end{proof}

\subsection{Equiangular tight frames}
\begin{definition}
An ES is an \textit{equiangular tight frame} (\textit{ETF}) if, in addition,
\begin{equation}\label{VVt}
    \begin{pmatrix} | & & |\\
\mathbf{v}_1 & \cdots & \mathbf{v}_r\\
| & & |
\end{pmatrix}\begin{pmatrix} - & \mathbf{v}_1 & -\\
& \vdots & \\
- & \mathbf{v}_r & -
\end{pmatrix} = \frac{r}{n} \mathbf{I}_{n}
\end{equation}
where $\mathbf I_{n} \in \mathbb{R}^{n \times n}$ is the identity matrix.
\end{definition}
Suppose $\mathbf{v}_1,\ldots,\mathbf{v}_r$ form an ETF. Then a number of additional results can be deduced. If $\mathbf{u}_1,\ldots,\mathbf{u}_r \in \mathbb{R}^n$ with $r \geq n$ is a collection of \textcolor{darkgreen}{unit} vectors, then the following always holds
\begin{equation}\label{welchBound}
    \max_{\substack{i,j \in [r] \\ i \neq j}} |\langle \mathbf{u}_i, \mathbf{u}_j \rangle| \geq \sqrt{\frac{r-n}{n(r-1)}}
\end{equation}
with equality if and only if $\mathbf{u}_1,..,\mathbf{u}_r$ is an ETF \cite{etfWelchBound}. Thus, $\alpha = \sqrt{\frac{r-n}{n(r-1)}}$ in $(\ref{alphaEquation})$. Furthermore, the matrix $\mathbf{V}^\top\mathbf{V} \in \mathbb{R}^{r \times r}$ in $(\ref{VtV})$, known as the \textit{Gram matrix}, has rank $n$ (Proposition 3, \cite{gramMatrixRank}). The Gram matrix gives a canonical representation of an equiangular tight frame. This results in a one-to-one correspondence between ETFs up to orthogonal transformation and their corresponding Gram matrix \cite{WALDRON20092228}, which means that $\mathbf{V}$ also has rank $n$.

ETFs with $r$ vectors in $\mathbb{R}^n$ do not exist for many values of $r$ and $n$, making ETFs quite rare \cite{ETFexistence}. Nonetheless, they have attracted a wide interest for a number of reasons. ETFs are a natural generalization of orthonormal sets of vectors where the number of vectors in the set is allowed to exceed the dimension of the space they lie in. ETFs minimize the maximum coherence between the vectors, attaining equality in what is known as the \textit{Welch bound} $(\ref{welchBound})$. ETFs can also be formulated for $\mathbb{C}^n$ and have found numerous applications in signal processing~\cite{steinerFrames}, coding theory~\cite{grassmannFrames}, and quantum information processing~\cite{quantumFrames}.

When a tensor is generated by the vectors in an ETF,  it is called an {\em ETF decomposable} tensor. Such tensors are a special case of {\em fradeco} tensors, which were studied in~\cite{OEDING2016125}. \textcolor{darkgreen}{In contrast to the case of orthogonally decomposable tensors, the complete equations determining fradeco tensors are not known. However, partial progress has been made in \cite{OEDING2016125} where a subset of the complete equations has been determined, in special cases.}

\textcolor{darkgreen}{ETFs are a special case of ESs, and hence, in addition to \Cref{kernelConditionLemma}, we also obtain the following sufficiency criteria for robust eigenvectors for ETF tensors.}
\begin{theorem}\label{constanteigenvector}
If $\mathbf{v}_1,\ldots,\mathbf{v}_r \in \mathbb{R}^n$ form an ETF, then $\displaystyle \sum_{i \in [r]\setminus\{j\}} \sigma_{ij}\mathbf{v}_i = C\mathbf{v}_j$ for some $C \in \mathbb{R}$, for all $j \in [r]$. In particular, all of $\mathbf{v}_1,\ldots,\mathbf{v}_r$ are eigenvectors of the tensor
\begin{equation}\label{allOnes}
    \mathcal{T} = \sum_{i = 1}^r \mathbf{v}_i^{\otimes d}
\end{equation}
when $d$ is even. Furthermore, in this case, all of these vectors are robust eigenvectors if
\begin{equation}\label{allOnesBound}
    \frac{\frac{r}{n}\alpha^{d-2}(d-1)}{1 + \alpha^{d-2}\left(\frac{r}{n}-1\right)} < 1,
\end{equation}
which always holds when $d$ is large enough.
\end{theorem}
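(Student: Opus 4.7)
The plan is to prove the three claims in order: the linear identity among the $\mathbf{v}_i$'s, the eigenvector property for even $d$, and the robustness bound; each piece follows from a short calculation starting from the tight-frame identity $\mathbf{V}\mathbf{V}^\top = \frac{r}{n}\mathbf{I}_n$ from \eqref{VVt}.

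First, to establish $\sum_{i\in[r]\setminus\{j\}}\sigma_{ij}\mathbf{v}_i = C\mathbf{v}_j$, I would apply $\mathbf{V}\mathbf{V}^\top$ to $\mathbf{v}_j$: on the one hand it equals $\frac{r}{n}\mathbf{v}_j$, and on the other hand it equals $\sum_{i=1}^r\langle\mathbf{v}_i,\mathbf{v}_j\rangle\mathbf{v}_i = \mathbf{v}_j + \alpha\sum_{i\neq j}\sigma_{ij}\mathbf{v}_i$. Solving gives $C = \frac{r-n}{n\alpha}$ (and of course $C=0$ in the degenerate orthonormal case $r=n$, $\alpha=0$, where the statement is vacuous).

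Second, for even $d$ (so $d-1$ is odd), I would contract directly:
\[
\mathcal{T}\cdot\mathbf{v}_j^{d-1} = \sum_{i=1}^r\langle\mathbf{v}_i,\mathbf{v}_j\rangle^{d-1}\mathbf{v}_i = \mathbf{v}_j + \alpha^{d-1}\sum_{i\neq j}\sigma_{ij}\mathbf{v}_i = \Bigl(1+\alpha^{d-1}C\Bigr)\mathbf{v}_j,
\]
using $\sigma_{ij}^{d-1}=\sigma_{ij}$. Substituting $C = (r-n)/(n\alpha)$ identifies the eigenvalue as $\mu_{j,d} = 1 + \alpha^{d-2}(\frac{r}{n}-1)$, which matches the denominator of \eqref{allOnesBound}.

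Third, for robustness I would invoke the spectral-radius bound \eqref{generalSpectralBound} from the proof of Theorem~\ref{mainTheorem} with $\lambda_i=1$ and $\alpha_{i,j}^{d-2}=\alpha^{d-2}$ (again since $d-2$ is even). This gives
\[
\rho(\mathbf{J}(\mathbf{v}_j)) \le \frac{(d-1)\alpha^{d-2}}{\mu_{j,d}}\,\Bigl\|\sum_{i\neq j}\mathbf{v}_i\mathbf{v}_i^\top\Bigr\|_2.
\]
The norm is evaluated using the ETF identity once more: $\sum_{i\neq j}\mathbf{v}_i\mathbf{v}_i^\top = \frac{r}{n}\mathbf{I}_n - \mathbf{v}_j\mathbf{v}_j^\top$, whose eigenvalues are $\frac{r}{n}$ (with multiplicity $n-1$) and $\frac{r}{n}-1$ (simple), giving operator norm $\frac{r}{n}$. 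Substituting produces exactly the bound \eqref{allOnesBound}; Lemma~\ref{powermethodtheorem} then yields robustness. Finally, since $r\ge n$ forces $\alpha=\sqrt{(r-n)/(n(r-1))}<1$, the factor $\alpha^{d-2}(d-1)\to 0$ while the denominator stays bounded below by $1$, so the bound is satisfied for all sufficiently large even $d$.

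None of the steps looks hard in isolation; the only spot worth care is the evaluation of $\|\sum_{i\neq j}\mathbf{v}_i\mathbf{v}_i^\top\|_2$, where using the ETF identity to rewrite the sum as $\frac{r}{n}\mathbf{I}_n-\mathbf{v}_j\mathbf{v}_j^\top$ (rather than bounding term-by-term via the triangle inequality) is what makes the final inequality match \eqref{allOnesBound} cleanly.
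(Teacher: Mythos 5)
Your proposal is correct and follows essentially the same route as the paper: the identity $\sum_{i\neq j}\sigma_{ij}\mathbf{v}_i = \frac{1}{\alpha}\bigl(\frac{r}{n}-1\bigr)\mathbf{v}_j$ from the tight-frame relation $\mathbf{V}\mathbf{V}^\top=\frac{r}{n}\mathbf{I}_n$ (the paper does the same computation in matrix form via $\mathbf{V}(\mathbf{V}^\top\mathbf{V}-\mathbf{I})=(\frac{r}{n}-1)\mathbf{V}$), the same contraction for even $d$, and the same appeal to \eqref{generalSpectralBound}. The only cosmetic difference is that you evaluate $\bigl\|\sum_{i\neq j}\mathbf{v}_i\mathbf{v}_i^\top\bigr\|_2=\frac{r}{n}$ exactly from the spectrum of $\frac{r}{n}\mathbf{I}_n-\mathbf{v}_j\mathbf{v}_j^\top$, whereas the paper bounds it by $\|\mathbf{V}\mathbf{V}^\top\|_2=\frac{r}{n}$; both yield \eqref{allOnesBound}.
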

\begin{proof}
Starting with the Gram matrix of the ETF,
$$\begin{pmatrix}
1 & \sigma_{1,2}\alpha & \hdots & \sigma_{1,r-1}\textcolor{darkgreen}{\alpha} & \sigma_{1,r}\alpha \\
\sigma_{2,1}\alpha & 1 & \hdots & \sigma_{2,r-1}\textcolor{darkgreen}{\alpha} & \sigma_{2,r}\alpha \\
\vdots & \vdots & \ddots & \vdots & \vdots \\
\sigma_{r-1,1}\alpha & \sigma_{r-1,2}\alpha & \hdots & 1 & \sigma_{r-1,r}\alpha \\
\sigma_{r,1}\alpha & \sigma_{r,2}\alpha & \hdots & \sigma_{r,r-1}\alpha & 1
\end{pmatrix} = \mathbf{V}^\top\mathbf{V}$$
we subtract the identity matrix on both sides of the equation to obtain
$$\begin{pmatrix}
0 & \sigma_{1,2}\alpha & \hdots & \sigma_{1,r-1} \textcolor{darkgreen}{\alpha} & \sigma_{1,r}\alpha \\
\sigma_{2,1}\alpha & 0 & \hdots & \sigma_{2,r-1} \textcolor{darkgreen}{\alpha} & \sigma_{2,r}\alpha \\
\vdots & \vdots & \ddots & \vdots & \vdots \\
\sigma_{r-1,1}\alpha & \sigma_{r-1,2}\alpha & \hdots & 0 & \sigma_{r-1,r}\alpha \\
\sigma_{r,1}\alpha & \sigma_{r,2}\alpha & \hdots & \sigma_{r,r-1}\alpha & 0
\end{pmatrix} = \mathbf{V}^\top\mathbf{V} - \mathbf{I}_{r \times r}.$$
Multiplying on the left of both sides of the equation by $\mathbf{V}$,
$$\mathbf{V}\begin{pmatrix}
0 & \sigma_{1,2}\alpha & \hdots & \sigma_{1,r-1} \textcolor{darkgreen}{\alpha} & \sigma_{1,r}\alpha \\
\sigma_{2,1}\alpha & 0 & \hdots & \sigma_{2,r-1} \textcolor{darkgreen}{\alpha} & \sigma_{2,r}\alpha \\
\vdots & \vdots & \ddots & \vdots & \vdots \\
\sigma_{r-1,1}\alpha & \sigma_{r-1,2}\alpha & \hdots & 0 & \sigma_{r-1,r}\alpha \\
\sigma_{r,1}\alpha & \sigma_{r,2}\alpha & \hdots & \sigma_{r,r-1}\alpha & 0
\end{pmatrix} = \mathbf{V}\mathbf{V}^\top\mathbf{V} - \mathbf{V}\mathbf{I}_{r \times r} = (\mathbf{V}\mathbf{V}^\top)\mathbf{V} - \mathbf{V}$$
and using $(\ref{VVt})$, we obtain
$$\mathbf{V}\begin{pmatrix}
0 & \sigma_{1,2}\alpha & \hdots & \sigma_{1,r-1} \textcolor{darkgreen}{\alpha} & \sigma_{1,r}\alpha \\
\sigma_{2,1}\alpha & 0 & \hdots & \sigma_{2,r-1}\textcolor{darkgreen}{\alpha} & \sigma_{2,r}\alpha \\
\vdots & \vdots & \ddots & \vdots & \vdots \\
\sigma_{r-1,1}\alpha & \sigma_{r-1,2}\alpha & \hdots & 0 & \sigma_{r-1,r}\alpha \\
\sigma_{r,1}\alpha & \sigma_{r,2}\alpha & \hdots & \sigma_{r,r-1}\alpha & 0
\end{pmatrix} = \frac{r}{n}\mathbf{I}_{r \times r}\mathbf{V} - \mathbf{V} = \left(\frac{r}{n} - 1\right)\mathbf{V}.$$
Dividing by $\alpha$, we have
$$\mathbf{V}\begin{pmatrix}
0 & \sigma_{1,2} & \hdots & \sigma_{1,r-1} & \sigma_{1,r} \\
\sigma_{2,1}& 0 & \hdots & \sigma_{2,r-1} & \sigma_{2,r}\\
\vdots & \vdots & \ddots & \vdots & \vdots \\
\sigma_{r-1,1}& \sigma_{r-1,2}& \hdots & 0 & \sigma_{r-1,r}\\
\sigma_{r,1}& \sigma_{r,2}& \hdots & \sigma_{r,r-1}& 0
\end{pmatrix}= \frac{1}{\alpha}\left(\frac{r}{n} - 1\right)\mathbf{V},$$
so $C = \frac{1}{\alpha}\left(\frac{r}{n} - 1\right)$.

When $d$ is even, we now show that all of $\mathbf{v}_1,\ldots,\mathbf{v}_r$ are eigenvectors of $\mathcal{T}$. We have
\[
\mathcal{T} \cdot \mathbf{v}^{d-1}_j = \sum_{i=1}^{r}\langle \mathbf{v}_i, \mathbf{v}_j\rangle^{d-1}\mathbf{v}_i = \mathbf{v}_j + \sum_{i \in [r]\setminus\{j\}} (\sigma_{i,j}\alpha)^{d-1}\mathbf{v}_i = \mathbf{v}_j + \alpha^{d-1}\sum_{i \in [r]\setminus\{j\}} \sigma_{i,j}\mathbf{v}_i
\]
$$= \mathbf{v}_{j} + \alpha^{d-1}\left(\frac{1}{\alpha}\left(\frac{r}{n}-1\right)\right)\mathbf{v}_{j} = \left(1 + \alpha^{d-2}\left(\frac{r}{n}-1\right)\right)\mathbf{v}_{j}.$$
Therefore, by \eqref{generalSpectralBound}, we obtain
\begin{align*}
\rho(\mathbf{J}(\mathbf{v}_j)) &  \le \frac{d-1}{1 + \alpha^{d-2}\left(\frac{r}{n}-1\right)}  \alpha^{d-2} \|\mathbf{V}\mathbf{V}^\top\|_2 = \frac{\frac{r}{n}\alpha^{d-2}(d-1)}{1 + \alpha^{d-2}\left(\frac{r}{n}-1\right)},
\end{align*}
where the last equality follows from \eqref{VVt}. Thus, $\mathbf{v}_j$ is a robust eigenvector for any $j \in [r]$ and all even $d$ for which the quantity above is strictly less than $1$ (which holds when $d$ is large enough).
\end{proof}
\begin{example}\textbf{(Orthogonally Decomposable Tensors)}
An ETF $\mathbf{v}_1,\ldots,\mathbf{v}_n \in \mathbb{R}^n$ with $r = n$ is clearly an orthonormal set of vectors, with constant $\alpha = 0$, and thus we may choose $\sigma_{i,j} = 1$ for all $i,j \in [r]$. Tensors $\mathcal{T}$ generated by this ETF are called \textit{orthogonally decomposable tensors} and their properties have been studied in \cite{Robeva,RobevaSeigal}. It is not hard to see that an orthogonally decomposable tensor has all of $\mathbf{v}_1,\ldots,\mathbf{v}_n$ as eigenvectors, and the spectral radius bound $(\ref{generalSpectralBound})$ is trivially equal to $0$ and therefore less than $1$, meaning all of $\mathbf{v}_1$,\ldots, $\mathbf{v}_n$ are also robust eigenvectors of $\mathcal{T}$, for all $d \geq 2$.
\end{example}
In the following sections, we first study in more detail the robust eigenvectors of tensors generated by particular ETFs, and then by an ES which is not an ETF.
\subsection{Regular Simplex Tensors}\label{sec::regularSimplex}

An ETF $\mathbf{v}_1,\ldots,\mathbf{v}_{n+1} \in \mathbb{R}^n$ with $r = n+1$ always consists of the vertices of a regular simplex in $\mathbb{R}^n$ (page 623, \cite{ETFexistence}), called a \textit{regular $n$-simplex frame}, or \textit{regular simplex frame} for short, with constant $\alpha = \frac{1}{n}$ and signs $\sigma_{i,j} = -1$ for all $i,j \in [r], i \neq j$. A particular example of a regular simplex is given by the \textit{Mercedes-Benz frame} in $\mathbb{R}^2$, as shown in Figure~\ref{MBframeFig}, consisting of the vectors
\begin{equation}\label{MBvectors}
    \mathbf{v}_1 = \begin{pmatrix} 0\\1
\end{pmatrix}, \mathbf{v}_2 = \begin{pmatrix} \frac{\sqrt 3}2\\ -\frac12\end{pmatrix}, \mathbf{v}_3 = \begin{pmatrix} -\frac{\sqrt{3}}2\\-\frac12
\end{pmatrix}.
\end{equation}

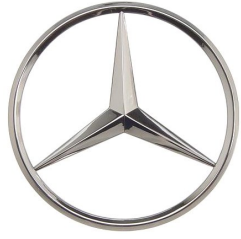
\begin{figure}
    \centering

\tikzset{every picture/.style={line width=0.75pt}} 

\begin{tikzpicture}[x=0.75pt,y=0.75pt,yscale=-1,xscale=1,scale=0.5]

\draw  [fill={rgb, 255:red, 0; green, 0; blue, 0 }  ,fill opacity=1 ] (315.3,305.12) .. controls (317.64,305.23) and (319.47,307.18) .. (319.39,309.47) .. controls (319.31,311.75) and (317.34,313.52) .. (315,313.41) .. controls (312.66,313.29) and (310.83,311.35) .. (310.91,309.06) .. controls (311,306.77) and (312.96,305.01) .. (315.3,305.12) -- cycle ;
\draw  [line width=1.5]  (187,313.41) .. controls (187,242.71) and (244.31,185.41) .. (315,185.41) .. controls (385.69,185.41) and (443,242.71) .. (443,313.41) .. controls (443,384.1) and (385.69,441.41) .. (315,441.41) .. controls (244.31,441.41) and (187,384.1) .. (187,313.41) -- cycle ;
\draw [line width=2.25]    (315.15,309.26) -- (315,189.41) ;
\draw [shift={(315,185.41)}, rotate = 89.93] [color={rgb, 255:red, 0; green, 0; blue, 0 }  ][line width=2.25]    (17.49,-5.26) .. controls (11.12,-2.23) and (5.29,-0.48) .. (0,0) .. controls (5.29,0.48) and (11.12,2.23) .. (17.49,5.26)   ;
\draw [line width=2.25]    (315.15,309.26) -- (210.36,377.23) ;
\draw [shift={(207,379.41)}, rotate = 327.03] [color={rgb, 255:red, 0; green, 0; blue, 0 }  ][line width=2.25]    (17.49,-5.26) .. controls (11.12,-2.23) and (5.29,-0.48) .. (0,0) .. controls (5.29,0.48) and (11.12,2.23) .. (17.49,5.26)   ;
\draw [line width=2.25]    (315.15,309.26) -- (421.57,373.34) ;
\draw [shift={(425,375.41)}, rotate = 211.05] [color={rgb, 255:red, 0; green, 0; blue, 0 }  ][line width=2.25]    (17.49,-5.26) .. controls (11.12,-2.23) and (5.29,-0.48) .. (0,0) .. controls (5.29,0.48) and (11.12,2.23) .. (17.49,5.26)   ;

\draw (273,213.81) node [anchor=north west][inner sep=0.75pt]  [font=\large]  {$v_{1}$};
\draw (214,317.81) node [anchor=north west][inner sep=0.75pt]  [font=\large]  {$v_{2}$};
\draw (390,317.81) node [anchor=north west][inner sep=0.75pt]  [font=\large]  {$v_{3}$};

\end{tikzpicture}

    \caption{The Mercedes-Benz frame $(\ref{MBvectors})$ on the $xy$-plane.}
    \label{MBframeFig}
\end{figure}

We will call a tensor generated by a regular simplex frame a \textit{regular $n$-simplex tensor}, or \textit{regular simplex tensor} for short. Let $\mathbf{V} \in \mathbb{R}^{n \times (n+1)}$ be the matrix whose columns are $\mathbf{v}_1,\ldots,\mathbf{v}_{n+1}$. We observe for the regular simplex frame that
$$\mathbf{V}\mathbf{V}^\top\begin{pmatrix}
1 \\ \vdots \\ 1
\end{pmatrix} = \begin{pmatrix}
1 & -\frac{1}{n} & \hdots & -\frac{1}{n} & -\frac{1}{n} \\
-\frac{1}{n} & 1 & \hdots & -\frac{1}{n} & -\frac{1}{n} \\
\vdots & \vdots & \ddots & \vdots & \vdots \\
-\frac{1}{n} & -\frac{1}{n} & \hdots & -\frac{1}{n} & 1
\end{pmatrix}\begin{pmatrix}
1 \\ \vdots \\ 1
\end{pmatrix} = \mathbf{0},$$
and since $\mathbf{V}$ has rank $n$, the kernel of $\mathbf{V}$ is the span of $(1,\ldots,1)^\top \in \mathbb{R}^{n+1}$. Thus, consider the regular simplex tensor
\begin{equation}\label{simplexTensor}
    \mathcal{T} = \sum_{i = 1}^{n+1} \mathbf{v}_i^{\otimes d}
\end{equation}
for $d \geq 2$. \textcolor{darkgreen}{Since regular simplex tensors satisfy the conditions of both \Cref{kernelConditionLemma,constanteigenvector}, we can conclude that all of $\mathbf{v}_1,\ldots,\mathbf{v}_{n+1}$ are eigenvectors of $\mathcal{T}$.}

There is a systematic method of generating regular simplex frames in $\mathbb{R}^n$ for all $n \geq 2$ as follows: if $\mathbf{e}_1,\ldots,\mathbf{e}_n \in \mathbb{R}^n$ are the standard basis vectors and $\mathbf{1}_n = (1,\ldots,1) \in \mathbb{R}^n$, then set
\begin{equation}\label{regularSimplex}
\begin{gathered}
    \mathbf{v}_i = \sqrt{1+\frac{1}{n}}\mathbf{e}_i - \frac{1}{n^{\frac{3}{2}}}\left(\sqrt{n+1}-1\right)\mathbf{1}_n, \quad \quad i \in [n] \\
    \mathbf{v}_{n+1} = -\frac{1}{\sqrt{n}}\mathbf{1}_n.
\end{gathered}    
\end{equation}
These vectors are constructed by projecting the standard basis vectors in $\mathbb{R}^{n+1}$ onto the subspace orthogonal to the vector $\boldsymbol{1}_{n+1}$ with an appropriate rotation and rescaling.

We now present a theorem which shows that all of $\mathbf{v}_1,\ldots,\mathbf{v}_{n+1}$ are also robust eigenvectors of $\mathcal{T}$ for many values of $n$ and $d$.
\begin{theorem}\label{thm:robustness_v_k}
Let
$$\mathcal{T} = \sum_{i=1}^{n+1} \mathbf{v}_i^{\otimes d}$$
be a tensor generated by a regular simplex frame $\mathbf{v}_1,\ldots,\mathbf{v}_{n+1} \in \mathbb{R}^n$. Then all of $\mathbf{v}_1,\ldots,\mathbf{v}_{n+1}$ are robust eigenvectors for $\mathcal{T}$ for $n \geq 2$ and $d \geq 3$ such that $n + d \geq 7$.
\end{theorem}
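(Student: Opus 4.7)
The plan is to deduce both the eigenvector property and robustness by applying Theorems~\ref{kernelConditionLemma} and~\ref{constanteigenvector} in the two parities of $d$ separately. Two structural facts about the regular simplex frame drive everything: $\sum_{i=1}^{n+1}\mathbf{v}_i=\mathbf{0}$, so $(1,\ldots,1)\in\mathrm{Ker}(\mathbf{V})$, and the frame is an ETF with $r=n+1$ and $\alpha=1/n$. For odd $d$, the second half of Theorem~\ref{kernelConditionLemma} applied with $\lambda_i=1$ immediately yields that each $\mathbf{v}_j$ is an eigenvector together with the robustness criterion $\|\mathbf{V}\mathbf{V}^\top\|_2\,\alpha^{d-2}(d-1)/(1-\alpha^{d-1})<1$. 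For even $d$, Theorem~\ref{constanteigenvector} supplies the eigenvector property and the analogous bound $(r/n)\alpha^{d-2}(d-1)/(1+\alpha^{d-2}(r/n-1))<1$.

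Next I would substitute the regular-simplex values. Using the ETF identity $\mathbf{V}\mathbf{V}^\top=(r/n)\mathbf{I}_n$ gives $\|\mathbf{V}\mathbf{V}^\top\|_2=(n+1)/n$, and after simplification both criteria collapse to
\[
\rho_{\mathrm{odd}}=\frac{(n+1)(d-1)}{n^{d-1}-1}<1,\qquad \rho_{\mathrm{even}}=\frac{(n+1)(d-1)}{n^{d-1}+1}<1.
\]
Since $\rho_{\mathrm{even}}\le\rho_{\mathrm{odd}}$, a single sufficient condition for robustness at both parities is the scalar inequality $n^{d-1}>(n+1)(d-1)+1$, and I would devote the rest of the proof to verifying it in the stated range.

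The final step is a short case analysis plus a one-line induction. On the boundary $n+d=7$ the three admissible pairs $(n,d)=(4,3),(3,4),(2,5)$ satisfy the inequality by direct check ($16>11$, $27>13$, $16>13$); for $d=3$ and $n\geq 5$ it factors as $(n-3)(n+1)>0$. For each fixed $n\geq 2$, I would then induct on $d$: assuming $n^{d-1}>(n+1)(d-1)+1$, multiplying by $n\geq 2$ gives $n^d\geq 2n^{d-1}>2(n+1)(d-1)+2$, which exceeds the next-step target $(n+1)d+1$ by $(n+1)(d-2)+1>0$ for $d\geq 2$. The one genuinely delicate step is keeping the odd/even cases parallel and correctly matching each to its theorem; no deeper obstacle appears, and the closing inequality is routine exponential-versus-linear bookkeeping.
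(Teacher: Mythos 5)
Your proposal is correct and follows essentially the same route as the paper: both parities are handled by the bounds \eqref{lambdasInKernelBound} and \eqref{allOnesBound}, which reduce to the single scalar inequality $n^{d-1}>(n+1)(d-1)+1$, i.e. the paper's $\gamma(n,d)>0$. The only (immaterial) difference is that you verify this inequality by checking the boundary cases $n+d=7$ and inducting on $d$, whereas the paper checks the same base cases and argues via positivity of the partial derivatives of $\gamma$.
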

\begin{proof}
For a regular $n$-simplex frame, we have $r = n+1$, $\alpha = \frac{1}{n}$, and $\sigma_{i,j} = -1$ for all $i,j \in [n+1]$, $i \neq j$. Since $(1,\ldots,1)^\top \in Ker(\mathbf{V})$, both bounds $(\ref{lambdasInKernelBound})$ and $(\ref{allOnesBound})$ apply. Thus,
\[
\frac{\frac{r}{n}\alpha^{d-2}(d-1)}{1 + \alpha^{d-2}\left(\frac{r}{n}-1\right)} = \frac{(n+1)(d-1)}{n^{d-1} + 1} < \frac{(n+1)(d-1)}{n^{d-1}-1} = \frac{\frac{r}{n}\alpha^{d-2}(d-1)}{\left(\min_{i \in [r]} |\lambda_i|\right)(1 - \alpha^{d-1})}.
\]
Hence, regardless of the parity of $d$, it suffices to find values of $n$ and $d$ for which $\frac{(n+1)(d-1)}{n^{d-1}-1} < 1$. This happens if and only if the following quantity is positive ($\gamma(n,d) > 0$):
\[
\gamma(n,d) = n^{d-1} +n-d-dn.
\]
We can easily check that $\gamma(n,d)$  is positive for the following values:
\[
\gamma(2,5) = 3, \quad \gamma(3,4) = 14,  \quad \gamma(4,3) = 5 
\]
Moreover the partial derivatives
\[
\frac{\partial}{\partial n}\gamma(n,d) = (d-1)(n^{d-2} -1), \quad \frac{\partial}{\partial d}\gamma(n,d) = \ln(n) n^{d-1} -(n+1)
\]
are positive for $n \ge 2$, $d \ge 3$.
This guarantees that $\gamma(n,d) > 0$ whenever
\[
(n,d) \ge (2,5) \text{ or }(n,d) \ge (3,4) \text{ or }(n,d) \ge (4,3), 
\]
and thus all of $\mathbf{v}_1,\ldots,\mathbf{v}_{n+1}$ are robust eigenvectors for $n \geq 2$ and $d \geq 3$ with $n + d \geq 7$.
\end{proof}

\subsection{Regular $2$-Simplex Tensors}
In fact, for regular $2$-simplex tensors, we can prove even stronger results compared to \Cref{thm:robustness_v_k}. The next theorem concerns not only robustness of the vectors in the regular simplex frame, but regions of convergence of the tensor power method, for tensors of order $d$ when $d \geq 4$ is even.
\begin{theorem}\label{nEqual2dEqual5Case} Let $\mathbf{v}_1,\mathbf{v}_2,\mathbf{v}_3 \in \mathbb{R}^2$ be vectors of a regular $2$-simplex frame. If $\mathbf{x}_0 \in \mathbb{R}^2$ and there is a unique $\mathbf{v} \in \{\mathbf{v}_1,-\mathbf{v}_1,\mathbf{v}_2,-\mathbf{v}_2,\mathbf{v}_3,-\mathbf{v}_3\}$ which maximizes $\langle \mathbf{v}, \mathbf{x}_0 \rangle$, then the tensor power method with initializing vector $\mathbf{x}_0$ applied to the tensor 
$$\mathcal{T}= \mathbf{v}_1^{\otimes d} + \mathbf{v}_2^{\otimes d} + \mathbf{v}_3^{\otimes d}.$$
will converge to $\mathbf{v}$, for all even $d \geq 6$. If $d = 4$, then any initializing vector $\mathbf{x}_0$ is a fixed point of the tensor power method.
\end{theorem}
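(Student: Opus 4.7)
The plan splits into the two cases. For $d = 4$, I would verify by a short symmetry reduction that $\mathcal T$ as a polynomial is a positive multiple of $(x^2+y^2)^2$: the dihedral group $D_6$ of the regular hexagon $\{\pm\mathbf v_1,\pm\mathbf v_2,\pm\mathbf v_3\}$ leaves $\mathcal T$ invariant, and the space of $D_6$-invariant real homogeneous polynomials of degree $4$ in two variables is one-dimensional (the first nontrivial hexagonal invariant $\mathrm{Re}(z^6)$ has degree $6$). Hence $\mathcal T\cdot\mathbf x^3 = \tfrac14\nabla p(\mathbf x)$ is a positive scalar multiple of $\mathbf x$, so $\phi(\mathbf x)=\mathbf x/\|\mathbf x\|$ and every unit vector is a fixed point.

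For $d\geq 6$ even, the strategy is to reduce the map $\phi$ on the unit circle to a one-dimensional iteration $\theta\mapsto T_d(\theta)$. Writing $\mathbf x=(\cos\theta,\sin\theta)$ and $g(\theta)=p(\mathbf x)=\sum_{i=1}^3 \cos^d(\theta-\theta_i)$, splitting $\nabla p$ into radial and tangential components on the unit circle via Euler's identity gives
\[
T_d(\theta) \;=\; \theta + \arctan\!\Bigl(\frac{g'(\theta)}{d\,g(\theta)}\Bigr),
\]
valid because $g(\theta)>0$ everywhere (for even $d$, at most one of the three summands can vanish at any $\theta$). The $D_6$ symmetry of $\mathcal T$ makes both the vertex angles (odd multiples of $\pi/6$, i.e.~the $\pm\mathbf v_i$) and the midpoint angles (even multiples of $\pi/6$) critical points of $g$, and so fixed points of $T_d$; these twelve points partition $S^1$ into twelve half-arcs forming a $D_6$-orbit. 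On the representative half-arc $[\pi/3,\pi/2]$ (from midpoint to vertex $\mathbf v_1$), I would establish: (i) $g'(\theta)>0$ on the interior, hence $T_d(\theta)>\theta$; and (ii) $T_d$ is strictly increasing on $[\pi/3,\pi/2]$, hence maps it into itself. Together these force any orbit with $\theta_0\in(\pi/3,\pi/2)$ to be strictly increasing and bounded above by $\pi/2$, so it converges to a fixed point in $(\theta_0,\pi/2]$, which by (i) must be $\pi/2$. The hypothesis that $\mathbf x_0$ has a unique maximizer in $\{\pm\mathbf v_i\}$ places $\theta_0$ in the interior of some half-arc, so by $D_6$-equivariance the same argument yields convergence to the corresponding endpoint, which is exactly that maximizer.

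The main obstacle is (i), with (ii) similar in flavor. For (i), I would use the Fourier expansion
\[
g(\theta)=\frac{3}{2^d}\Bigl[\binom{d}{d/2}+2\sum_{s\geq 1}(-1)^s\binom{d}{d/2-3s}\cos(6s\theta)\Bigr],
\]
differentiate, and use the Chebyshev identity $\sin(6s\theta)=\sin(6\theta)\,U_{s-1}(\cos 6\theta)$ to factor $g'(\theta)=\frac{36\sin(6\theta)}{2^d}R_d(\cos 6\theta)$ with $R_d(y)=\sum_{s\geq 1}(-1)^{s+1}s\binom{d}{d/2-3s}U_{s-1}(y)$. Since $\sin(6\theta)>0$ on $(\pi/3,\pi/2)$, it suffices to show $R_d(y)>0$ on $[-1,1]$: the leading constant $\binom{d}{d/2-3}$ dominates the remaining contributions thanks to the bound $|U_{s-1}(y)|\leq s$ on $[-1,1]$ and rapid binomial decay of $\binom{d}{d/2-3s}/\binom{d}{d/2-3}$ in $s$. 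Step (ii) follows by an analogous calculation, most cleanly by writing $\mathcal T\cdot\mathbf x^{d-1}$ as $\frac{3}{2^{d-1}}e^{i\theta}Q_d(e^{6i\theta})$ for an explicit trigonometric polynomial $Q_d$ and showing that $\arg Q_d(e^{6i\theta})$ is monotone increasing in $\theta$.
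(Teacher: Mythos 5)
Your route is genuinely different from the paper's (the paper works in the projective coordinate $\alpha_k=-\langle\mathbf x_k,\mathbf v_2\rangle/\langle\mathbf x_k,\mathbf v_1\rangle$, derives the explicit recursion $\alpha_{k+1}-\tfrac12=(1+C_{d,k})(\alpha_k-\tfrac12)$, and shows $-1<C_{d,k}<0$ by an explicit algebraic factorization that is uniform in $d$), and parts of yours are correct and attractive. The $d=4$ case via invariant theory is clean and valid: $p$ is invariant under the dihedral group of the hexagon $\{\pm\mathbf v_i\}$, the degree-$4$ invariants are spanned by $(x^2+y^2)^2$, so $\nabla p$ is radial and every unit vector is fixed. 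The reduction to the circle map $T_d(\theta)=\theta+\arctan\bigl(g'(\theta)/(d\,g(\theta))\bigr)$, the identification of the twelve fixed points at multiples of $\pi/6$, and the "monotone orbit on a half-arc converges to the vertex endpoint" scheme are all sound, \emph{conditional} on your steps (i) and (ii). Your Fourier computation of $g$ and the factorization $g'(\theta)=\tfrac{36}{2^d}\sin(6\theta)\,R_d(\cos 6\theta)$ are also correct.

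The gap is in the proposed proof of (i), and it is not cosmetic. You want $R_d(y)=\sum_{s\ge1}(-1)^{s+1}s\binom{d}{d/2-3s}U_{s-1}(y)>0$ on $[-1,1]$ by bounding $|U_{s-1}(y)|\le s$ and invoking "rapid binomial decay." But the near-central binomial coefficients decay only like $\binom{d}{d/2-3s}\approx\binom{d}{d/2}e^{-18s^2/d}$, so relative to the leading term $\binom{d}{d/2-3}$ the tail $\sum_{s\ge2}s^2\binom{d}{d/2-3s}$ behaves like $\int s^2e^{-18s^2/d}\,ds\sim d^{3/2}$ and overwhelms it for large even $d$. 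Concretely, already the single $s=2$ term ruins the estimate for moderate $d$: for $d=100$ one has $\binom{100}{44}/\binom{100}{47}=\tfrac{47\cdot46\cdot45}{56\cdot55\cdot54}\approx0.585$, so $2^2\binom{100}{44}>2\binom{100}{47}$, and your lower bound for $R_{100}$ is already negative. The positivity of $R_d$ (equivalently, that $g$ has no critical points strictly between a midpoint and a vertex) appears to be true, but it must come from the alternating sign structure and the oscillation of the $U_{s-1}$, not from absolute-value domination; as written, step (i) is unproved for all but small $d$, and step (ii) (monotonicity of $\arg Q_d(e^{6i\theta})$) will run into the same obstruction. This is exactly the difficulty the paper sidesteps by telescoping $\alpha_k^{d-1}-(1-\alpha_k)^{d-1}$ into a geometric-type sum whose terms are individually signed, yielding $-1<C_{d,k}<0$ for every even $d\ge6$ at once; you would need an analogous exact cancellation argument for $R_d$ to complete your version.
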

We leave the proof of this theorem in the Appendix.

The results of \Cref{thm:robustness_v_k,nEqual2dEqual5Case} can be visualized in Table~\ref{convergenceTable}. 
We denote by a tick \textcolor{darkgreen}{mark (\cmark)} convergence which is guaranteed by \Cref{thm:robustness_v_k}, and by a \textcolor{darkgreen}{cross mark (\xmark)} failure of convergence for the case $n = 2$ and $d = 4$, where every initializing vector is a fixed point of the tensor power method due to \Cref{nEqual2dEqual5Case}, \textcolor{darkgreen}{and for the cases $n = 2$ and $d = 2,3$, due to Theorem 16 (i) and (iii) in \cite{conjectureFalse}. Note that \cite{conjectureFalse} appeared during the revision phase of the manuscript of this paper.}

In addition, we performed the following two numerical experiments. The first numerical experiment concerns robustness. Let $\mathcal{T}$ be the tensor $(\ref{simplexTensor})$ generated by the regular simplex frame $(\ref{regularSimplex})$. We can observe the values of $n$ and $d$ for which the tensor power method applied to $\mathcal{T}$ converges to vectors in the frame. Using MATLAB, we choose an initial vector $\mathbf{x}_0$ drawn from a uniform distribution on the unit sphere in $\mathbb{R}^n$ and apply the tensor power method $\mathbf{x}_{k+1} = \frac{\mathcal{T}(\mathbf{x}_k,\ldots,\mathbf{x}_k,\cdot)}{\|\mathcal{T}(\mathbf{x}_k,\ldots,\mathbf{x}_k,\cdot)\|}$ for $100$ iterations. \textcolor{darkgreen}{In Table~\ref{convergenceTable},} we denote by a \textcolor{darkgreen}{triangle (\ding{115})} a lack of convergence, which occurs if $\|\mathbf{x}_{100}-\mathbf{v}_j\| > 10^{-10}$ for all eigenvectors $\mathbf{v}_j$ of $\mathcal{T}$. We then performed this experiment for $2 \leq n,d \leq 10$. In the cases with the \textcolor{darkgreen}{tick} mark \textcolor{darkgreen}{(convergence guaranteed by \Cref{thm:robustness_v_k})}, we did not observe the tensor power method converging to any vectors other than $\mathbf{v}_1,\ldots,\mathbf{v}_{n+1}$. This suggests that the frame vectors may be the only robust eigenvectors, which we leave as a conjecture:
\begin{conjecture}\label{MBconjecture}
The \textcolor{darkgreen}{only} robust eigenvectors of a regular simplex tensor $(\ref{simplexTensor})$ are the vectors in the frame.
\end{conjecture}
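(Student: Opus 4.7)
The plan is to first classify all eigenvectors of the regular simplex tensor via a polynomial reduction, and then to show that each non-frame eigenvector has Jacobian spectral radius at least $1$ by producing an explicit direction in $\mathbf{v}^{\perp}$ on which the Jacobian acts with eigenvalue of magnitude at least $1$.

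To classify eigenvectors I would exploit the fact that $\sum_{i=1}^{n+1}\mathbf{v}_i=0$ is the unique, up to scalar, linear dependence among the regular simplex frame vectors. Every $\mathbf{x}\in\mathbb{R}^n$ has a unique expansion $\mathbf{x}=\sum_i b_i\mathbf{v}_i$ with $\sum_i b_i=0$, and a Gram computation yields $\langle\mathbf{v}_i,\mathbf{x}\rangle=\tfrac{n+1}{n}b_i$. The eigenvalue equation $\mathcal{T}\cdot\mathbf{x}^{d-1}=\mu\mathbf{x}$ then forces $\bigl(\tfrac{n+1}{n}\bigr)^{d-1}b_i^{d-1}-\mu b_i=c$ for all $i$, for some constant $c$. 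Since the polynomial $p(t)=\bigl(\tfrac{n+1}{n}\bigr)^{d-1}t^{d-1}-\mu t-c$ has at most three real roots, the $b_i$'s take at most three distinct values. Eigenvectors are therefore classified by partitions of $\{1,\ldots,n+1\}$ into at most three blocks with values $\beta_s$ constrained by $\sum_s k_s\beta_s=0$ and $\|\mathbf{x}\|=1$; frame vectors $\pm\mathbf{v}_j$ are precisely those of type $(1,n)$ and all others are ``non-frame.''

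To establish non-robustness of non-frame eigenvectors I would use a ``swap direction'' technique: for any two indices $i,i'$ in a common block, $\mathbf{v}_i-\mathbf{v}_{i'}$ lies in $\mathbf{v}^\perp$, and a direct application of Lemma~\ref{jacobianLemma}, combined with the identity $\langle\mathbf{v}_j,\mathbf{v}_i-\mathbf{v}_{i'}\rangle=\tfrac{n+1}{n}(\delta_{j,i}-\delta_{j,i'})$, shows it is an eigenvector of $\mathbf{J}(\mathbf{v})$ with eigenvalue $\tfrac{(d-1)(n+1)}{n\mu}a_i^{d-2}$. For two-block partitions $(k_1,k_2)$ with $2\leq k_1\leq k_2$, substituting the closed-form expression for $\mu$ gives $|\lambda_{\mathrm{swap}}|=\tfrac{(d-1)(n+1)k_2^{d-2}}{|k_2^{d-1}\pm k_1^{d-1}|}$ in the smaller block, and using $k_2\leq n$ this exceeds $1$ for all $d\geq 3$. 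Three-block partitions with at least one block of size $\geq 2$ admit the same swap analysis; the only subcase in which all swap eigenvalues vanish is the partition $(1,1,n-1)$ with the $(n-1)$-block carrying value $0$, and there I would instead compute the Jacobian eigenvalue in the direction $\mathbf{v}_i+\mathbf{v}_j$ of the two nonzero singleton vertices, obtaining $(d-1)(n-1)/(n+1)$, which exceeds $1$ except at the boundary $(n,d)=(2,4)$. The three-singleton partition $(1,1,1)$ occurs only when $n=2$ and is already addressed by Theorem~\ref{nEqual2dEqual5Case}.

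The main obstacle lies in three-block partitions with three distinct nonzero values $\beta_s$. For $d-1>3$, the polynomial $p(t)$ may have three real roots among complex roots without a clean closed form, so the swap-eigenvalue computation must be combined with either a uniform lower bound on $|\mu|/(C|\beta_s|^{d-2})$ derived from the constraint $\sum_s k_s\beta_s=0$, or with a full isotypic decomposition of $\mathbf{v}^\perp$ under the stabilizer $S_{k_1}\times S_{k_2}\times S_{k_3}$ that extracts a non-swap eigenvalue of magnitude at least $1$. Boundary cases such as $(n,d)=(2,4)$, where every unit vector is a fixed point of the power iteration, should delineate the regime of validity of the conjecture.
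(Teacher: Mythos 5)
First, an important point of context: the paper does not prove this statement. It appears only as Conjecture~\ref{MBconjecture}, supported by the numerical experiments in Table~\ref{convergenceTable}, so there is no proof of the authors' to compare yours against; your attempt has to stand on its own. On its own terms, the strategy is sensible and much of it checks out. Writing $\mathbf{x}=\sum_i b_i\mathbf{v}_i$ with $\sum_i b_i=0$, the identity $\langle\mathbf{v}_i,\mathbf{x}\rangle=\frac{n+1}{n}b_i$ is correct; since $\ker(\mathbf{V})$ is spanned by $(1,\dots,1)^\top$, the eigenvalue equation does force $\bigl(\frac{n+1}{n}\bigr)^{d-1}b_i^{d-1}-\mu b_i=c$ for all $i$, and because the derivative of this trinomial is a binomial with at most two real zeros, the $b_i$ indeed take at most three distinct values. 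The swap direction $\mathbf{v}_i-\mathbf{v}_{i'}$ lies in $\mathbf{x}^{\perp}$ and, by Lemma~\ref{jacobianLemma}, is an eigenvector of $\mathbf{J}(\mathbf{x})$ with eigenvalue $\frac{(d-1)(n+1)}{n\mu}a_i^{d-2}$; your two-block bound $|\lambda_{\mathrm{swap}}|\ge\frac{(d-1)(n+1)}{2k_2}>1$ and the value $(d-1)(n-1)/(n+1)$ for the $(1,1,n-1)$ configuration are both correct. You also correctly note that the conjecture as literally stated must be restricted: at $(n,d)=(2,4)$ no eigenvector is robust, and more generally the frame vectors themselves are only known to be robust in the regime of Theorem~\ref{thm:robustness_v_k}.

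The proposal is nevertheless not a proof, for the reason you flag yourself: the three-block partitions with three distinct nonzero values are left open, and this is not a routine loose end. A useful way to see the difficulty is that, using $\bigl(\frac{n+1}{n}\bigr)^{d-1}\beta_s^{d-1}=\mu\beta_s+c$, the swap eigenvalue in block $s$ simplifies to exactly $(d-1)\bigl(1+\frac{c}{\mu\beta_s}\bigr)$; to rule out robustness you must show that some block of size at least two has $\bigl|1+\frac{c}{\mu\beta_s}\bigr|\ge\frac{1}{d-1}$, and nothing in your argument excludes the possibility that every such block has $\beta_s$ within the required window around $-c/\mu$ (two distinct values can both be close to $-c/\mu$ without coinciding). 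So either the uniform lower bound you allude to, or the non-swap part of the isotypic decomposition, has to be carried out explicitly; as written the case is an acknowledged hole. Three smaller gaps: (i) you need the converse to Lemma~\ref{powermethodtheorem} — that an eigenvalue of $\mathbf{J}(\mathbf{x})$ of modulus strictly greater than one precludes attraction — which is standard (instability of hyperbolically unstable fixed points) but must be invoked, since the paper only states the sufficient direction; (ii) configurations where the relevant eigenvalue equals exactly $1$ are inconclusive at the linearized level and need a separate argument; (iii) for $n=2$ the three-singleton case with odd $d$ is not covered by Theorem~\ref{nEqual2dEqual5Case}, and the paper's eigenvector tables dispose of it only by explicit computation for $d\le 10$, not in general.
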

\textcolor{darkgreen}{When $n = 2$, Theorem 12 of \cite{conjectureFalse} enumerates all possible eigenvectors of a regular simplex tensor, and Theorem 16 of \cite{conjectureFalse} shows that the robust eigenvectors are the vectors in the frame. \Cref{MBconjecture} is therefore true when $n = 2$.}

\begin{table}
\centering
\begin{tabular}{|l|l|l|l|l|l|l|l|l|l|}
\hline
\backslashbox{$n$}{$d$} & 2 & 3 & 4 & 5 & 6 & 7 & 8 & 9 & 10 \\ \hline
2   & \xmark  & \xmark  & \xmark  & \cmark  & \cmark  & \cmark  & \cmark  &  \cmark &  \cmark  \\ \hline
3   &  \ding{115}  &  \ding{115}  &  \cmark   &  \cmark & \cmark  &  \cmark &  \cmark  & \cmark  &  \cmark  \\ \hline
4   &  \ding{115}  & \cmark   & \cmark   &  \cmark  & \cmark   &  \cmark  & \cmark  &  \cmark & \cmark   \\ \hline
5   &  \ding{115} &  \cmark & \cmark  & \cmark & \cmark  &  \cmark & \cmark  & \cmark &  \cmark  \\ \hline
6   &  \ding{115} &  \cmark &  \cmark &  \cmark & \cmark  & \cmark  &  \cmark & \cmark  & \cmark   \\ \hline
7   & \ding{115}  &  \cmark & \cmark  &  \cmark & \cmark  & \cmark  & \cmark  &  \cmark &   \cmark \\ \hline
8   & \ding{115}  &  \cmark & \cmark  &  \cmark & \cmark  & \cmark  & \cmark  &  \cmark &   \cmark \\ \hline
9   & \ding{115}  &  \cmark & \cmark  &  \cmark & \cmark  & \cmark  & \cmark  &  \cmark &   \cmark    \\ \hline
10  & \ding{115}  &  \cmark & \cmark  &  \cmark & \cmark  & \cmark  & \cmark  &  \cmark &   \cmark    \\ \hline
\end{tabular}
\caption{Convergence of the tensor power method for the tensor $\mathcal{T} = \sum_{i = 1}^{n+1} \mathbf{v}_i^{\otimes d}$ generated by the regular simplex frame $(\ref{regularSimplex})$.
\textcolor{darkgreen}{Tick mark (\cmark): convergence guaranteed by \Cref{thm:robustness_v_k}; cross mark (\xmark): failure of convergence when $n = 2$ for $d = 4$, (every initializing vector is a fixed point, see \Cref{nEqual2dEqual5Case}) and $d = 2,3$ (see Theorem 16 (i) and (iii) in \cite{conjectureFalse}); triangle (\ding{115}): lack of convergence in the numerical experiment.}}\label{convergenceTable}
\end{table}


The second numerical experiment concerns the regions of convergence of the tensor power method in \Cref{nEqual2dEqual5Case}. Figure~\ref{PowerMethod2D} shows regions of convergence to the eigenvectors $\mathbf{v}_1$, $\mathbf{v}_2$, and $\mathbf{v}_3$, starting from an initial vector $\mathbf{x}_0$ of the tensor power method applied to the regular simplex tensor $\mathcal{T}$ in $(\ref{simplexTensor})$ generated by the vectors in a Mercedes-Benz frame $(\ref{MBvectors})$. For $\mathbf{x}_0$ in the blue, red, and green regions, the method will convergence to $\mathbf{v}_1$, $\mathbf{v}_2$, and $\mathbf{v}_3$, respectively. As \Cref{nEqual2dEqual5Case} predicts, when $d \geq 6$ is even, the regions of convergence form a partition of the unit disk into sectors. One can also observe a fractal subdivision of the regions of convergence for odd values of $d$, which we lack an explanation for. As a consequence of \Cref{mainTheorem}, however, larger values of $d$ result in greater robustness of the eigenvectors, and hence the observed thinning of these fractal subdivisions.

\begin{figure}
  \centering
\[
\begin{array}{ccc}
d = 6 & d = 8 & d = 10\\
    \includegraphics[width=0.3\textwidth]{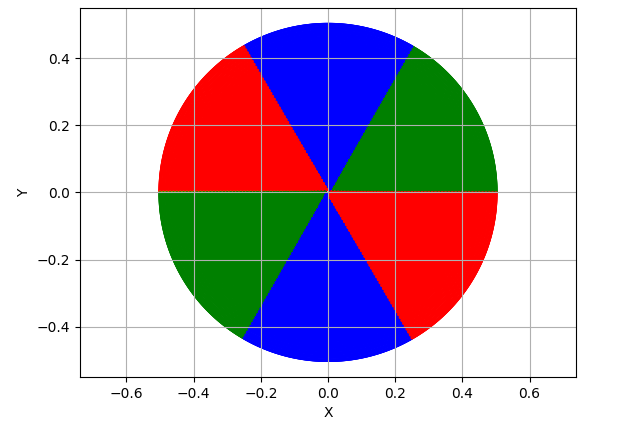}&
    \includegraphics[width=0.3\textwidth]{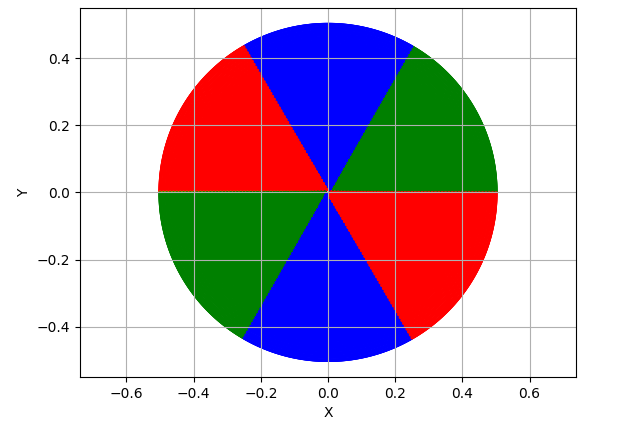}&
	\includegraphics[width=0.3\textwidth]{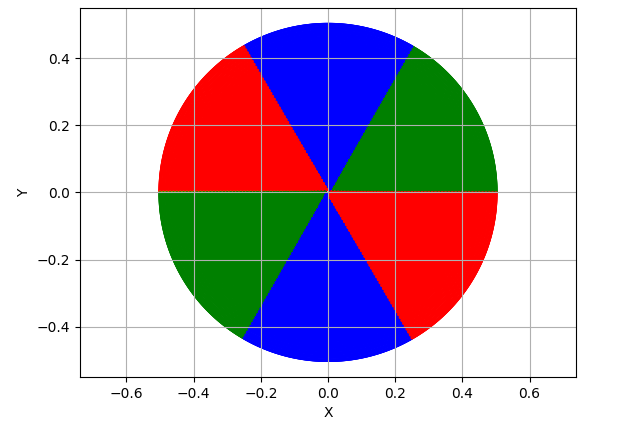} \\
&&\\
d = 5 & d = 7 & d = 9\\
	\includegraphics[width=0.3\textwidth]{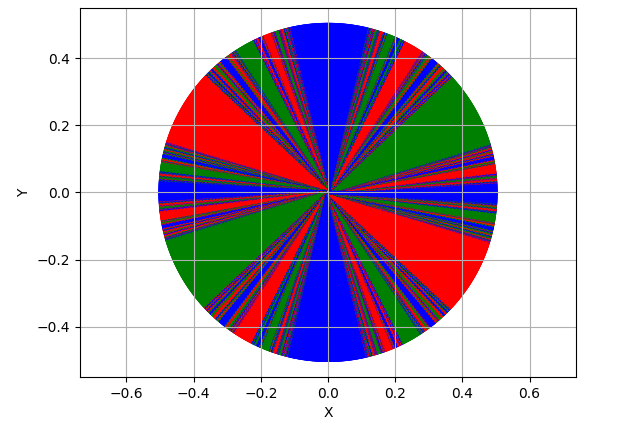} &
    \includegraphics[width=0.3\textwidth]{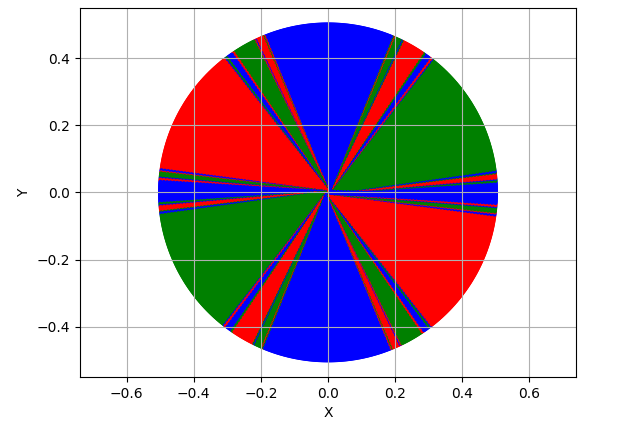}&
	\includegraphics[width=0.3\textwidth]{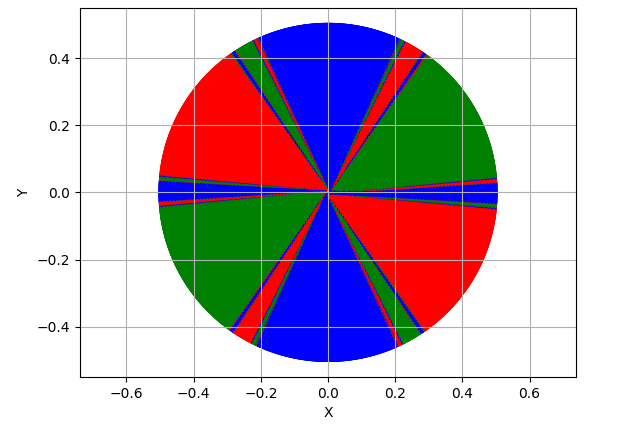}
\end{array}
\]
     \caption{Regions of convergence on the unit disk in $\mathbb{R}^2$ of the tensor power method for different values of $d$ of the Mercedez-Benz tensor.}\label{PowerMethod2D}
\end{figure}

For the tensor $\mathcal{T}$ generated by a Mercedes-Benz frame ($n = 2$) for $3 \leq d \leq 10$ given by the vectors in $(\ref{MBvectors})$, we can find its eigenvectors and their corresponding eigenvalues and multiplicities, which we show in Tables \ref{MBtensorEigenvectors1} and \ref{MBtensorEigenvectors2}.

\begin{table}
\centering
\begin{tabular}{|cccc|}
\hline
$d$                                        & eigenvector                                           & eigenvalue                                & multiplicity \\ \hline
\multicolumn{1}{|l|}{\multirow{3}{*}{$3$}} & $(0,1)^\top$                                          & $\frac{3}{4}$                             & $1$          \\ \cline{2-4} 
\multicolumn{1}{|l|}{}                     & $(\frac{\sqrt{3}}{2},-\frac{1}{2})^\top$              & $\frac{3}{4}$                             & $1$          \\ \cline{2-4} 
\multicolumn{1}{|l|}{}                     & $(-\frac{\sqrt{3}}{2},-\frac{1}{2})^\top$             & $\frac{3}{4}$                             & $1$          \\ \hline
\multicolumn{1}{|l|}{\multirow{4}{*}{$4$}} & $(0,1)^\top$                                          & $\frac{9}{8}$                             & $1$          \\ \cline{2-4} 
\multicolumn{1}{|l|}{}                     & $(\frac{\sqrt{3}}{2},-\frac{1}{2})^\top$              & $\frac{9}{8}$                             & $1$          \\ \cline{2-4} 
\multicolumn{1}{|l|}{}                     & $(-\frac{\sqrt{3}}{2},-\frac{1}{2})^\top$             & $\frac{9}{8}$                             & $1$          \\ \cline{2-4} 
\multicolumn{1}{|l|}{}                     & $(1,0)^\top$                                          & $\frac{9}{8}$                             & $1$          \\ \hline
\multicolumn{1}{|l|}{\multirow{5}{*}{$5$}} & $(0,1)^\top$                                          & $\frac{15}{16}$                           & $1$          \\ \cline{2-4} 
\multicolumn{1}{|l|}{}                     & $(\frac{\sqrt{3}}{2},-\frac{1}{2})^\top$              & $\frac{15}{16}$                           & $1$          \\ \cline{2-4} 
\multicolumn{1}{|l|}{}                     & $(-\frac{\sqrt{3}}{2},-\frac{1}{2})^\top$             & $\frac{15}{16}$                           & $1$          \\ \cline{2-4} 
\multicolumn{1}{|l|}{}                     & $(-\frac{\sqrt{2}i}{2},\frac{\sqrt{2}}{2})^\top$      & $\frac{3\sqrt{2}}{8}$                     & $1$          \\ \cline{2-4} 
\multicolumn{1}{|l|}{}                     & $(\frac{\sqrt{2}i}{2},\frac{\sqrt{2}}{2})^\top$       & $\frac{3\sqrt{2}}{8}$                     & $1$          \\ \hline
\multicolumn{1}{|l|}{\multirow{6}{*}{$6$}} & $(0,1)^\top$                                          & $\frac{33}{32}$                           & $1$          \\ \cline{2-4} 
\multicolumn{1}{|l|}{}                     & $(\frac{\sqrt{3}}{2},-\frac{1}{2})^\top$              & $\frac{33}{32}$                           & $1$          \\ \cline{2-4} 
\multicolumn{1}{|l|}{}                     & $(-\frac{\sqrt{3}}{2},-\frac{1}{2})^\top$             & $\frac{33}{32}$                           & $1$          \\ \cline{2-4} 
\multicolumn{1}{|l|}{}                     & $(\frac{1}{2},\frac{\sqrt{3}}{2})^\top$               & $\frac{27}{32}$                           & $1$          \\ \cline{2-4} 
\multicolumn{1}{|l|}{}                     & $(-\frac{1}{2},\frac{\sqrt{3}}{2})^\top$              & $\frac{27}{32}$                           & $1$          \\ \cline{2-4} 
\multicolumn{1}{|l|}{}                     & $(1,0)^\top$                                          & $\frac{27}{32}$                           & $1$          \\ \hline
\multicolumn{1}{|l|}{\multirow{5}{*}{$7$}} & $(0,1)^\top$                                          & $\frac{63}{64}$                           & $1$          \\ \cline{2-4} 
\multicolumn{1}{|l|}{}                     & $(\frac{\sqrt{3}}{2},-\frac{1}{2})^\top$              & $\frac{63}{64}$                           & $1$          \\ \cline{2-4} 
\multicolumn{1}{|l|}{}                     & $(-\frac{\sqrt{3}}{2},-\frac{1}{2})^\top$             & $\frac{63}{64}$                           & $1$          \\ \cline{2-4} 
\multicolumn{1}{|l|}{}                     & $(-\frac{\sqrt{2}i}{2},\frac{\sqrt{2}}{2})^\top$      & $0$                                       & $2$          \\ \cline{2-4} 
\multicolumn{1}{|l|}{}                     & $(\frac{\sqrt{2}i}{2},\frac{\sqrt{2}}{2})^\top$       & $0$                                       & $2$          \\ \hline
\multicolumn{1}{|l|}{\multirow{8}{*}{$8$}} & $(0,1)^\top$                                          & $\frac{129}{128}$                         & $1$          \\ \cline{2-4} 
\multicolumn{1}{|l|}{}                     & $(\frac{\sqrt{3}}{2},-\frac{1}{2})^\top$              & $\frac{129}{128}$                         & $1$          \\ \cline{2-4} 
\multicolumn{1}{|l|}{}                     & $(-\frac{\sqrt{3}}{2},-\frac{1}{2})^\top$             & $\frac{129}{128}$                         & $1$          \\ \cline{2-4} 
\multicolumn{1}{|l|}{}                     & $(-\frac{\sqrt{2}i}{2},\frac{\sqrt{2}}{2})^\top$      & $\frac{3}{16}$                            & $1$          \\ \cline{2-4} 
\multicolumn{1}{|l|}{}                     & $(\frac{\sqrt{2}i}{2},\frac{\sqrt{2}}{2})^\top$        & $\frac{3}{16}$                            & $1$          \\ \cline{2-4} 
\multicolumn{1}{|l|}{}                     & $(-\frac{1}{2},\frac{\sqrt{3}}{2})^\top$              & $\frac{81}{128}$                          & $1$          \\ \cline{2-4} 
\multicolumn{1}{|l|}{}                     & $(\frac{1}{2},\frac{\sqrt{3}}{2})^\top$              & $\frac{81}{128}$                          & $1$          \\ \cline{2-4} 
\multicolumn{1}{|l|}{}                     & $(1,0)^\top$                                          & $\frac{81}{128}$                          & $1$          \\ \hline
\multicolumn{1}{|l|}{\multirow{9}{*}{$9$}} & $(0,1)^\top$                                          & $\frac{255}{256}$                         & $1$          \\ \cline{2-4} 
\multicolumn{1}{|l|}{}                     & $(\frac{\sqrt{3}}{2},-\frac{1}{2})^\top$              & $\frac{255}{256}$                         & $1$          \\ \cline{2-4} 
\multicolumn{1}{|l|}{}                     & $(-\frac{\sqrt{3}}{2},-\frac{1}{2})^\top$              & $\frac{255}{256}$                         & $1$          \\ \cline{2-4} 
\multicolumn{1}{|l|}{}                     & $\mathbf{x} \approx (1/Z)(0.393942-0.624439i,1)^\top$ & $\lambda_1 \approx -0.234194 - 0.107117i$ & $1$          \\ \cline{2-4} 
\multicolumn{1}{|l|}{}                     & $\mathbf{y} \approx (1/Z)(1.965672i,1)^\top$          & $\lambda_2 \approx 0.257529$              & $1$          \\ \cline{2-4} 
\multicolumn{1}{|l|}{}                     & $\overline{\mathbf{x}}$                               & $\overline{\lambda_1}$                    & $1$          \\ \cline{2-4} 
\multicolumn{1}{|l|}{}                     & $(-\mathbf{x}_1,\mathbf{x}_2)^\top$                   & $\lambda_1$                               & $1$          \\ \cline{2-4} 
\multicolumn{1}{|l|}{}                     & $(-\overline{\mathbf{x}_1},\mathbf{x}_2)^\top$        & $\overline{\lambda_1}$                    & $1$          \\ \cline{2-4} 
\multicolumn{1}{|l|}{}                     & $(-\overline{\mathbf{y}_1},\mathbf{y}_2)^\top$        & $\lambda_2$                               & $1$          \\ \hline
\end{tabular}
\caption{The unit-norm eigenvectors and their eigenvalues with multiplicities of the tensor $\mathcal{T} = \sum_{i = 1}^3 \left((0,1)^\top\right)^{\otimes d} + \left(\left(\sqrt 3/2,-1/2\right)^\top\right)^{\otimes d} + \left(\left(-\sqrt{3}/2,-1/2\right)^\top\right)^{\otimes d}$, where $Z \in \mathbb{R}$ is a normalizing constant.}\label{MBtensorEigenvectors1}
\end{table}

\begin{table}
\centering
\begin{tabular}{|cccc|}
\hline
$d$                                         & eigenvector                                      & eigenvalue        & multiplicity \\ \hline
\multicolumn{1}{|l|}{\multirow{8}{*}{10}} & $(0,1)^\top$                                     & $\frac{513}{512}$ & $1$          \\ \cline{2-4} 
\multicolumn{1}{|l|}{}                    & $(\frac{\sqrt{3}}{2},-\frac{1}{2})^\top$         & $\frac{513}{512}$ & $1$          \\ \cline{2-4} 
\multicolumn{1}{|l|}{}                    & $(-\frac{\sqrt{3}}{2},-\frac{1}{2})^\top$        & $\frac{513}{512}$ & $1$          \\ \cline{2-4} 
\multicolumn{1}{|l|}{}                    & $(-\frac{\sqrt{2}i}{2},\frac{\sqrt{2}}{2})^\top$ & $0$               & $2$          \\ \cline{2-4} 
\multicolumn{1}{|l|}{}                    & $(\frac{\sqrt{2}i}{2},\frac{\sqrt{2}}{2})^\top$  & $0$               & $2$          \\ \cline{2-4} 
\multicolumn{1}{|l|}{}                    & $(-\frac{1}{2},\frac{\sqrt{3}}{2})^\top$         & \textcolor{darkgreen}{$\frac{243}{512}$}               & $1$          \\ \cline{2-4} 
\multicolumn{1}{|l|}{}                    & $(\frac{1}{2},\frac{\sqrt{3}}{2})^\top$          & $\frac{243}{512}$ & $1$          \\ \cline{2-4} 
\multicolumn{1}{|l|}{}                    & $(1,0)^\top$                                     & $\frac{243}{512}$ & $1$          \\ \hline
\end{tabular}
\caption{Continued from Table \ref{MBtensorEigenvectors1}.}\label{MBtensorEigenvectors2}
\end{table}

The eigenvectors can be found by solving \textcolor{darkgreen}{a system} of two polynomial equations.
\textcolor{darkgreen}{We are looking for eigenvectors $(u,v)^\top \in \mathbb{C}^2$ satisfying $u^2 + v^2 = 1$ (i.e., having the unit norm if the eigenvector is real).
In order to be an eigenvector, $(u,v)^\top$ should be collinear to $\mathcal{T} \cdot \left((u,v)^\top\right)^{d-1}$.
These conditions can be encoded by the following equations:}

\medskip

\begin{equation*}
\begin{cases}
\text{det}\begin{pmatrix}
\frac{\sqrt{3}}{2}\left(\frac{\sqrt{3}}{2}u - \frac{1}{2}v\right)^{d-1} -  \frac{\sqrt{3}}{2}\left(-\frac{\sqrt{3}}{2}u - \frac{1}{2}v\right)^{d-1} & u \\
v^{d-1} -\frac{1}{2}\left(\frac{\sqrt{3}}{2}u - \frac{1}{2}v\right)^{d-1} - \frac{1}{2}\left(-\frac{\sqrt{3}}{2}u - \frac{1}{2}v\right)^{d-1} & v\end{pmatrix} = 0, \\
\hfil u^2 + v^2 = 1,
\end{cases}    
\end{equation*}
\textcolor{darkgreen}{where the elements in the first column in the determinant are exactly the elements of $\mathcal{T} \cdot \left((u,v)^\top\right)^{d-1}$.}

The multiplicity of an eigenvector is the multiplicity of its solution in this system of equations. In \cite{CartwrightSturmfels}, it was shown that if a $n \times \cdots \times n$ ($d$ times) tensor with complex entries has finitely many eigenvectors in $\mathbb{C}^n$ up to scaling, then this number is $\frac{(d-1)^n-1}{d-2}$ counted with multiplicity. In this setting, we have $\frac{(d-1)^n - 1}{d-2} = d$, and as can be seen in the tables, the sum of the multiplicities of each eigenvector for each $d$ is equal to $d$, meaning that these tables form a complete list of the eigenvectors of $\mathcal{T}$. The table also suggests another conjecture:
\begin{conjecture}\label{complexConjecture}
All eigenvectors of a regular simplex tensor are complex, except for the vectors in the frame, and when $d \geq 6$ and even, the vectors on the boundary of the regions of convergence.
\end{conjecture}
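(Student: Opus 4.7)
My approach is to prove the conjecture completely in the case $n=2$ (which is the setting covered by Tables~\ref{MBtensorEigenvectors1}--\ref{MBtensorEigenvectors2}) and then outline the additional work required for general $n$.

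For $n=2$, parameterize $\mathbf{x}(\theta) = (\cos\theta,\sin\theta)$; then $\mathbf x$ is a real eigenvector of $\mathcal T$ precisely when $\theta$ is a critical point of $F(\theta) = \mathcal T\,\mathbf x(\theta)^{\otimes d}$. Setting $\psi = \theta - \pi/2$, one has $F(\psi)=\sum_{j=0}^{2}\cos^{d}(\psi+2\pi j/3)$. The three quantities $c_j=\cos(\psi+2\pi j/3)$ are exactly the roots of the Chebyshev cubic $4x^{3}-3x=t$ with $t:=\cos 3\psi$, whose elementary symmetric functions are $(0,-3/4,t/4)$. Newton's identities therefore give
\[
p_d := c_0^d+c_1^d+c_2^d = \tfrac{3}{4}p_{d-2}+\tfrac{t}{4}p_{d-3},\quad p_0=3,\;p_1=0,\;p_2=3/2,\;p_3=3t/4.
\]
A parity induction shows $p_d(t)$ has the same parity as $d$, so I write $p_d(t)=q_d(t^{2})$ for $d$ even and $p_d(t)=t\,q_d(t^{2})$ for $d$ odd. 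The reduced recursions are
\[
q_d=\tfrac{3}{4}q_{d-2}+\tfrac{s}{4}q_{d-3}\ (d\text{ even}), \qquad q_d=\tfrac{3}{4}q_{d-2}+\tfrac{1}{4}q_{d-3}\ (d\text{ odd}),
\]
and a second induction yields $q_d(s)>0,\;q_d'(s)\ge 0$ on $[0,1]$, with $q_d'(s)>0$ strictly whenever $d$ is even and $d\ge 6$ (for then $d-3\ge 3$ is odd so $q_{d-3}>0$ appears in the derivative recursion; for $d=4$ this fails and indeed $F$ is constant).

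Differentiating gives $F'(\psi)=-3\sin(3\psi)\,p_d'(t)$, so the critical points of $F$ split into two families. The factor $\sin(3\psi)$ vanishes at the six angles $\psi=k\pi/3$, which correspond exactly to the frame eigenvectors $\pm\mathbf v_1,\pm\mathbf v_2,\pm\mathbf v_3$. For the second factor, when $d$ is odd, $p_d'(t)=q_d(t^{2})+2t^{2}q_d'(t^{2})\ge q_d(t^2)>0$ on $[-1,1]$, so no further real critical points occur. When $d$ is even and $d\ge 6$, $p_d'(t)=2t\,q_d'(t^{2})$ vanishes only at $t=0$, i.e.\ at the six angles $\psi=\pi/6+k\pi/3$, which are precisely the boundary directions of the convergence regions from Theorem~\ref{nEqual2dEqual5Case}. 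The Cartwright--Sturmfels formula gives exactly $d$ eigenvectors of $\mathcal T$ in $\P^{1}_{\C}$ counted with multiplicity; the remaining $d-3$ (odd $d$) or $d-6$ (even $d\ge 6$) eigenvectors are therefore necessarily non-real, proving the conjecture for $n=2$.

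For general $n$, combining $\mathcal T\mathbf x^{d-1}=\mu\mathbf x$ with the expansion $\mathbf x=\tfrac{n}{n+1}\sum_i c_i\mathbf v_i$ (valid because $\sum_i\mathbf v_i=0$) forces the coordinates $c_i=\langle\mathbf v_i,\mathbf x\rangle$ to satisfy a single polynomial equation $c_i^{d-1}-\gamma c_i=\kappa$, subject to $\sum c_i=0$ and $\sum c_i^{2}=(n+1)/n$. Since the orthogonal projections of $\mathbf v_2,\dots,\mathbf v_{n+1}$ onto $\mathbf v_1^{\perp}$ form, after rescaling, a regular $(n-1)$-simplex frame, an induction on $n$ looks natural for ruling out real eigenvectors that lie in a coordinate hyperplane $\mathbf v_i^{\perp}$. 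The main obstacle I anticipate is the generic case in which all $n+1$ coefficients $c_i$ take distinct real values on a single level set of $x\mapsto x^{d-1}-\gamma x$: handling it requires a delicate bound on the number of real roots of $x^{d-1}-\gamma x-\kappa$ that can simultaneously obey the affine and quadratic normalizations, and most likely a representation-theoretic input exploiting the $S_{n+1}$-action on the regular simplex.
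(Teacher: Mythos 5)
The statement you are proving is Conjecture~\ref{complexConjecture}, which the paper leaves \emph{unproven}: its only support is the numerical eigenvector tables (Tables~\ref{MBtensorEigenvectors1}--\ref{MBtensorEigenvectors2}, computed for the Mercedes-Benz frame, $n=2$) together with the Cartwright--Sturmfels count. So there is no proof in the paper to compare against, and your $n=2$ argument is genuinely new content. As far as I can check it is correct: real unit eigenvectors are exactly the critical points of $F(\psi)=p_d(\cos 3\psi)$ on the circle (Lagrange multipliers), the Chebyshev/Newton-identity recursion $p_d=\tfrac34 p_{d-2}+\tfrac t4 p_{d-3}$ with $(e_1,e_2,e_3)=(0,-3/4,t/4)$ is right, and the resulting polynomials reproduce every eigenvalue in the tables ($p_4\equiv 9/8$, $p_5=\tfrac{15}{16}t$, $p_6=\tfrac{27}{32}+\tfrac{3}{16}t^2$ giving $33/32$ and $27/32$, $p_8$ giving $129/128$ and $81/128$, etc.), and the zeros of $\sin 3\psi$ and of $t=\cos3\psi$ are exactly the frame directions and the bisector (boundary) directions of Theorem~\ref{nEqual2dEqual5Case}. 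Two small repairs are needed: (i) the induction hypothesis ``$q_d(s)>0$'' fails at the base case $q_1=0$, so state it as $q_d\ge 0$ with strict positivity for $d\ge 2$ (which is all the recursion needs); (ii) your own computation shows $F$ is \emph{constant} for $d=4$ (and $d=2$), so every real unit vector is then an eigenvector --- this means the conjecture as literally stated must exclude $d\in\{2,4\}$, and it also shows the $d=4$ row of Table~\ref{MBtensorEigenvectors1} cannot be a complete list of isolated eigenvectors; you should say this explicitly rather than leave it implicit.

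The genuine gap is that the conjecture, as written, concerns regular simplex tensors for all $n\ge 2$, and your treatment of $n\ge 3$ is only a program, not a proof. The reduction to $c_i^{d-1}-\gamma c_i=\kappa$ with $\sum_i c_i=0$ and $\sum_i c_i^2=\tfrac{n+1}{n}$ (using $\mathbf V\mathbf V^\top=\tfrac{n+1}{n}\mathbf I$ and $\ker\mathbf V=\langle\mathbf 1\rangle$) is correct and is the right starting point, but the hard case you identify --- all $n+1$ projections $c_i$ landing on distinct real roots of $x^{d-1}-\gamma x-\kappa$ --- is precisely where the conjecture lives, and you offer no argument there. (There is also a definitional issue for $n\ge 3$: ``the boundary of the regions of convergence'' is only defined by Theorem~\ref{nEqual2dEqual5Case} for $n=2$, so for general $n$ one must first say which real eigenvectors are being allowed in the even case.) I would recommend presenting the $n=2$ argument as a theorem settling the conjecture for regular $2$-simplex tensors with $d\ge 3$, $d\ne 4$, and clearly labelling the $n\ge 3$ case as open.
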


\textcolor{darkgreen}{This conjecture has been proven in the case $n = 2$ due to Theorem 12 (ii) and (iii) in \cite{conjectureFalse}.}

\textcolor{darkgreen}{We see from Tables \ref{MBtensorEigenvectors1} and \ref{MBtensorEigenvectors2} that when $d \geq 6$ and even, the vectors $(\frac{1}{2},\frac{\sqrt{3}}{2})^\top$, $(1,0)^\top$, and $(-\frac{1}{2},\frac{\sqrt{3}}{2})^\top$ appear as eigenvectors, which lie exactly on the boundary of the regions of convergence seen in Figure~\ref{PowerMethod2D}. Figure~\ref{PowerMethod2D} suggests that these eigenvectors are not robust.}

\textcolor{darkgreen}{Whether the only robust eigenvectors of a general ES or ETF tensor are the vectors generating the tensor, or under what conditions this occurs, is left as an open problem in \Cref{openProblem}.}

\subsection{Other Equiangular Tensors}
If $\mathbf{V} \in \mathbb{R}^{n \times r}$ is the matrix of an ETF whose columns are $\mathbf{v}_1,\ldots,\mathbf{v}_r \in \mathbb{R}^n$, then more examples of ETFs include the diagonals of a cube in $\mathbb{R}^3$,
$$\mathbf{V} = \frac{1}{\sqrt{3}}\begin{pmatrix}
1 & -1 & -1 & -1 \\
1 & 1 & -1 & 1 \\
1 & 1 & 1 & -1
\end{pmatrix},$$
the diagonals of a regular icosahedron in $\mathbb{R}^3$ (Figure~\ref{icosahedron}),
\begin{equation}\label{icosahedralFrame}
    \mathbf{V} = \frac{1}{\sqrt{1 + \varphi^2}}\begin{pmatrix}
0 & 0 & 1 & -1 & \varphi & -\varphi \\
1 & -1 & \varphi & \varphi & 0 & 0 \\
\varphi & \varphi & 0 & 0 & 1 & 1
\end{pmatrix},
\end{equation}
where $\varphi = \frac{1 + \sqrt{5}}{2}$, and the following $16$ equiangular lines in $\mathbb{R}^6$,
$$\mathbf{V} = \frac{1}{\sqrt{6}}\left(\begin{smallmatrix}
1 & -1 & -1 & -1 & -1 & -1 & 1 & 1 & 1 & 1 & 1 & 1 & 1 & 1 & 1 & 1 \\
1 & -1 & 1 & 1 & 1 & 1 & -1 & -1 & -1 & -1 & 1 & 1 & 1 & 1 & 1 & 1 \\
1 & 1 & -1 & 1 & 1 & 1 & -1 & 1 & 1 & 1 & -1 & -1 & -1 & 1 & 1 & 1 \\
1 & 1 & 1 & -1 & 1 & 1 & 1 & -1 & 1 & 1 & -1 & 1 & 1 & -1 & -1 & 1 \\
1 & 1 & 1 & 1 & -1 & 1 & 1 & 1 & -1 & 1 & 1 & -1 & 1 & -1 & 1 & -1 \\
1 & 1 & 1 & 1 & 1 & -1 & 1 & 1 & 1 & -1 & 1 & 1 & -1 & 1 & -1 & -1
\end{smallmatrix}\right).$$

\begin{figure}
    \centering
    \includegraphics[width=0.225\textwidth]{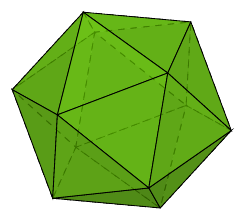}
    \caption{A regular icosahedron in $\mathbb{R}^3$.  \phantom{\cite{pict}}}
    \label{icosahedron}
\end{figure}

\begin{example}\textbf{(Icosahedral Frame)}
Consider the tensor $\mathcal{T}$ generated by the icosahedral frame $(\ref{icosahedralFrame})$,
$$\mathcal{T} = \sum_{i = 1}^6 \mathbf{v}_i^{\otimes d},$$
for $d \geq 2$. Here, $n = 3$, $r = 6$, $\alpha = \frac{1}{\sqrt{5}}$, and the Gram matrix is
\begin{equation}\label{gramMatrixIcosaedron}
\mathbf{V}^\top\mathbf{V} = \begin{pmatrix}
1 & \alpha & \alpha & \alpha & \alpha & \alpha \\
\alpha & 1 & -\alpha & -\alpha & \alpha & \alpha \\
\alpha & -\alpha & 1 & \alpha & \alpha & -\alpha \\
\alpha & -\alpha & \alpha & 1 & -\alpha & \alpha \\
\alpha & \alpha & \alpha & -\alpha & 1 & -\alpha \\
\alpha & \alpha & -\alpha & \alpha & -\alpha & 1
\end{pmatrix}.    
\end{equation}
By \Cref{constanteigenvector}, all of $\mathbf{v}_1,\ldots,\mathbf{v}_6$ are robust eigenvectors of $\mathcal{T}$ for all even $d \geq 6$ where the bound
\begin{equation}\label{icosahedralBound}
    \frac{\frac{r}{n}\alpha^{d-2}(d-1)}{1 + \alpha^{d-2}\left(\frac{r}{n}-1\right)} = \frac{2(d-1)\left(\frac{1}{\sqrt{5}}\right)^{d-2}}{1 + \left(\frac{1}{\sqrt{5}}\right)^{d-2}}
\end{equation}
from $(\ref{allOnes})$ is strictly less than $1$. However, when $d$ is odd, one can verify that indeed, $\mathbf{v}_2,\ldots,\mathbf{v}_6$ are not eigenvectors of $\mathcal{T}$. But $\mathbf{v}_1$ is still an eigenvector of $\mathcal{T}$ because $\sigma_{1,1} = \cdots = \sigma_{6,1}$ (they are all equal to $1$) as seen from the Gram matrix $(\ref{gramMatrixIcosaedron})$. Thus, the bound $(\ref{allOnes})$ still applies for $\mathbf{v}_1$ and $(\ref{icosahedralBound})$ is strictly less than $1$ for all $d \geq 5$, regardless of the parity of $d$, and hence for all these values of $d$, $\mathbf{v}_1$ is a robust eigenvector.
\end{example}
\begin{example}\textbf{(An ES that is not an ETF)}
Below are $6$ equiangular lines in $\mathbb{R}^4$, forming an ES that is not an ETF:
$$\mathbf{V} = \begin{pmatrix}
1 & \frac{1}{3} & \frac{1}{3} & \frac{1}{3} & \frac{1}{3} & \frac{1}{3} \\
0 & \frac{2\sqrt{2}}{3} & -\frac{\sqrt{2}}{3} & -\frac{\sqrt{2}}{3} & -\frac{\sqrt{2}}{3} & -\frac{\sqrt{2}}{3} \\
0 & 0 & \frac{\sqrt{6}}{3} & 0 & -\frac{\sqrt{6}}{3} & 0 \\
0 & 0 & 0 & \frac{\sqrt{6}}{3} & 0 & -\frac{\sqrt{6}}{3}
\end{pmatrix}.$$
Then, \Cref{constanteigenvector} fails, and indeed, if $\mathcal{T}$ is the following tensor generated by this ES,
$$\mathcal{T} = \sum_{i = 1}^6 \mathbf{v}_i^{\otimes d},$$
then none of $\mathbf{v}_1,\ldots,\mathbf{v}_6$ are eigenvectors of $\mathcal{T}$ for all $d \geq 2$. But if $\mathcal{T}$ is the following tensor,
$$\mathcal{T} = \sum_{i = 1}^6 \lambda_i\mathbf{v}_i^{\otimes d},$$
where $(\lambda_1,\ldots,\lambda_6) \in Ker(\mathbf{V})$, then \Cref{kernelConditionLemma} still holds with $\mu_j = \lambda_j$ and $d$ odd, so that all of $\mathbf{v}_1,\ldots,\mathbf{v}_6$ are eigenvectors. When $d$ is even, $\mathbf{v}_1$ is an eigenvector with $\mu_j = \lambda_j$ since $\sigma_{1,1} = \cdots = \sigma_{6,1}$ (they are all equal to $1$), while $\mathbf v_2,\ldots, \mathbf v_6$ are not.
\end{example}

\section{Conclusion}\label{conclusionSection}
We have given sufficient conditions for when an eigenvector in a symmetric decomposition of a tensor is robust. We have then explored robustness for a family of tensors that are generated by sets of vectors that are mutually equiangular. We leave behind an open problem whose solution would entirely solve the task of characterising eigenvectors that can be retrieved by the tensor power method:
\begin{problem}{\label{openProblem}}
Under what conditions are the elements $\mathbf{v}_1,\ldots,\mathbf{v}_r$ of an ES \textcolor{darkgreen}{the only} real, robust eigenvectors of a tensor $\mathcal{T}$ they generate? Is there an efficient method to distinguish the vectors of the ES and the other (if any) robust eigenvectors of $\mathcal{T}$?
\end{problem}
In addition, it would be interesting to extend our results to tensors generated by non-equiangular sets.

In \cite{MuHsuGoldfarb} it was shown that the eigenvectors of orthogonally decomposable tensors are \textit{stable} under perturbation of the tensor, i.e., if a tensor with small Frobenius norm is added to an orthogonally decomposable tensor, then the eigenvectors of this new tensor can be retrieved by the tensor power method and they will be close to the original eigenvectors. From other numerical experiments we performed, we strongly believe that this is also true for equiangular tight frame decomposable tensors:

\begin{conjecture}\label{stablilityConjecture}
The eigenvectors of an equiangular tight frame decomposable tensor are stable under perturbation and can be approximately recovered using the tensor power method.
\end{conjecture}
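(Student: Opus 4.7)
The plan is to upgrade the strict bound $\rho(\mathbf{J}(\mathbf{v}_j)) < 1$ from Theorem~\ref{constanteigenvector} (or Theorem~\ref{kernelConditionLemma} in the setting where that one applies) to a Frobenius-neighborhood of the unperturbed ETF-decomposable tensor $\mathcal{T}_0 = \sum_{i=1}^r \lambda_i \mathbf{v}_i^{\otimes d}$, using continuity of both the eigenvector map and the power-iteration Jacobian of Lemma~\ref{jacobianLemma}.

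First I would apply the implicit function theorem to the augmented eigenvector system
$$F(\mathbf{v}, \mu; \mathcal{T}) = \bigl(\mathcal{T}\cdot\mathbf{v}^{d-1} - \mu\mathbf{v},\ \|\mathbf{v}\|^2 - 1\bigr)$$
at the base point $(\mathbf{v}_j, \mu_j, \mathcal{T}_0)$. The derivative in $(\mathbf{v},\mu)$ is a bordered matrix whose principal block is $(d-1)\mathcal{T}_0\cdot\mathbf{v}_j^{d-2} - \mu_j\mathbf{I}$, with border $-\mathbf{v}_j$ and bottom row $2\mathbf{v}_j^\top$. Its invertibility follows from $\mu_j\neq 0$ and the strict inequality $\rho(\mathbf{J}(\mathbf{v}_j)) < 1$: the latter says $1$ is not in the spectrum of $\mu_j^{-1}(d-1)(\mathcal{T}_0\cdot\mathbf{v}_j^{d-2} - \mu_j\mathbf{v}_j\mathbf{v}_j^\top)$, and this, together with the border in the $\mathbf{v}_j$-direction, excludes $0$ from the spectrum of the full bordered matrix. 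This yields smooth functions $\tilde{\mathbf{v}}_j(\mathcal{E})$, $\tilde{\mu}_j(\mathcal{E})$ on a ball $\|\mathcal{E}\|_F < \delta_0$ with $\tilde{\mathbf{v}}_j(0)=\mathbf{v}_j$ and $\|\tilde{\mathbf{v}}_j(\mathcal{E})-\mathbf{v}_j\| = O(\|\mathcal{E}\|_F)$, giving the first (algebraic) half of the conjecture.

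Second, the Jacobian of the power iteration for $\tilde{\mathcal{T}} = \mathcal{T}_0 + \mathcal{E}$ at $\tilde{\mathbf{v}}_j(\mathcal{E})$ depends polynomially on $(\mathcal{E},\tilde{\mathbf{v}}_j,\tilde{\mu}_j)$ via Lemma~\ref{jacobianLemma}. Upper semicontinuity of the spectral radius, together with the strict inequality at $\mathcal{E}=0$, gives $\rho(\mathbf{J}_{\tilde{\mathcal{T}}}(\tilde{\mathbf{v}}_j(\mathcal{E}))) < 1$ for $\|\mathcal{E}\|_F$ sufficiently small, and Lemma~\ref{powermethodtheorem} then certifies $\tilde{\mathbf{v}}_j(\mathcal{E})$ as an attracting fixed point of the power iteration for $\tilde{\mathcal{T}}$.

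The hard part will be the second half of the conjecture\textemdash \emph{approximate recovery} of each $\tilde{\mathbf{v}}_j(\mathcal{E})$ by the power method from a practical initializer. Figure~\ref{PowerMethod2D} already shows that in the unperturbed setting the basins of attraction can have intricate (possibly fractal) boundaries for odd~$d$, so convergence from an arbitrary initial vector is hopeless in general. The natural route, modeled on the MuHsuGoldfarb argument for odeco tensors, is (i)~produce an adapted norm in which $\phi$ is a strict contraction on a ball of quantifiable radius $\epsilon_0$ around each $\mathbf{v}_j$ at $\mathcal{E}=0$, (ii)~show via the continuity estimates above that an $\epsilon_0/2$-sub-ball remains a basin for $\tilde{\mathcal{T}}$ whenever $\|\mathcal{E}\|_F$ is small enough, and (iii)~combine with a random-restart-plus-deflation scheme so that with high probability at least one restart lands in each such ball. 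Carrying out (i) quantitatively in the ETF setting\textemdash where, unlike the odeco case, the generators are not orthogonal and the cross terms $\alpha^{d-2}\sum_{i\ne j}\lambda_i\sigma_{i,j}^{d-2}\mathbf{v}_i\mathbf{v}_i^\top$ only vanish as $d\to\infty$\textemdash is where I expect the bulk of the new work to lie, and it is the main reason this remains a conjecture rather than a theorem.
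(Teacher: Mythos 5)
First, a point of comparison: the paper does not prove Conjecture~\ref{stablilityConjecture} at all --- it is stated as an open conjecture supported only by numerical experiments, so there is no ``paper proof'' to measure your attempt against. Judged on its own terms, your proposal is a sensible two-stage plan, and the first stage is essentially sound. The implicit-function-theorem step works: writing $\mathbf{A} = (d-1)\,\mathcal{T}_0\cdot\mathbf{v}_j^{d-2} - \mu_j\mathbf{I}$, the condition $\rho(\mathbf{J}(\mathbf{v}_j))<1$ forces $\mathbf{A}$ to be invertible on $\mathbf{v}_j^{\perp}$ (since there $\mathbf{J}(\mathbf{v}_j)$ agrees with $\tfrac{d-1}{\mu_j}\mathcal{T}_0\cdot\mathbf{v}_j^{d-2}$), while $\mathbf{A}\mathbf{v}_j=(d-2)\mu_j\mathbf{v}_j\neq 0$ handles the remaining direction, so the bordered matrix is invertible and the perturbed eigenpair $(\tilde{\mathbf{v}}_j(\mathcal{E}),\tilde\mu_j(\mathcal{E}))$ exists and is $O(\|\mathcal{E}\|_F)$-close. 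Continuity of the spectral radius (here just continuity of $\|\cdot\|_2$ on symmetric matrices, via Lemma~\ref{jacobianLemma}) then preserves $\rho<1$, and Lemma~\ref{powermethodtheorem} certifies robustness. Two caveats you should make explicit: the perturbation $\mathcal{E}$ must be symmetric for Lemma~\ref{jacobianLemma} to apply, and the whole argument only covers the regime where the frame vectors are already robust eigenvectors of $\mathcal{T}_0$ --- i.e.\ $d$ even and large enough that the bound \eqref{allOnesBound} of Theorem~\ref{constanteigenvector} holds --- which is narrower than the conjecture as stated.

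The genuine gap is the one you yourself flag: the ``approximate recovery'' half. Local persistence of an attracting fixed point says nothing about whether the power method, started from a practical (e.g.\ random) initializer, lands in its basin; the fractal basin boundaries in Figure~\ref{PowerMethod2D} for odd $d$ show this is not a formality even for $\mathcal{E}=0$. Your steps (i)--(iii) (a quantified contraction ball, its persistence under perturbation, and a restart-plus-deflation scheme) are the right skeleton, but none of them is carried out, and step (i) is genuinely nontrivial in the ETF setting because the cross terms $\alpha^{d-2}\sum_{i\neq j}\lambda_i\sigma_{i,j}^{d-2}\mathbf{v}_i\mathbf{v}_i^{\top}$ do not vanish as they do for odeco tensors; moreover deflation is delicate here since subtracting $\tilde\mu_j\tilde{\mathbf{v}}_j^{\otimes d}$ from an ETF-decomposable tensor does not leave an ETF-decomposable tensor. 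So what you have is a credible proof of a weaker statement (local stability of the robust frame eigenvectors under small symmetric perturbations, for $d$ in the range covered by Theorem~\ref{constanteigenvector}), together with an honest roadmap --- not a proof --- for the recovery claim that is the substantive content of the conjecture.
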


\section*{Acknowledgements}
We wish to thank Jeffery Zhang and Kevin Shen for creating Figure~\ref{PowerMethod2D}.

\appendix
\section{Proofs for Mercedes-Benz frames}
\begin{proof}[Proof of \Cref{nEqual2dEqual5Case}] Suppose $\mathbf{v}_1,\mathbf{v}_2,\mathbf{v}_3$ is the Mercedes-Benz frame $(\ref{MBvectors})$. Let $\mathbf{x}_0 \in \mathbb{R}^2$ be an initializing vector for the tensor power method applied to $\mathcal{T}$. Then
\begin{align*}
\mathbf{x}_{k+1} & := \textcolor{darkgreen}{\mathcal{T} \cdot \mathbf{x}_k^{d-1} =
\mathbf{v}_1 (\langle \mathbf{x}_k, \mathbf{v}_1 \rangle)^{d-1}+
\mathbf{v}_2 (\langle \mathbf{x}_k, \mathbf{v}_2 \rangle)^{d-1}+
\mathbf{v}_3 (\langle \mathbf{x}_k, \mathbf{v}_3 \rangle)^{d-1}} \\
 &= \begin{pmatrix}
           0 \\
           1
         \end{pmatrix} c_k^{d-1}
        +
        \begin{pmatrix}
           \frac{\sqrt{3}}{2} \\
           -\frac{1}{2}
         \end{pmatrix} a_k^{d-1}
        +
        \begin{pmatrix}
           -\frac{\sqrt{3}}{2} \\
           -\frac{1}{2}
         \end{pmatrix} b_k^{d-1} = \begin{pmatrix}
           \frac{\sqrt{3}}{2}(a_k^{d-1} - b_k^{d-1}) \\
           c_k^{d-1} - \frac{1}{2}(a_k^{d-1} + b_k^{d-1})
         \end{pmatrix},
\end{align*}
where
$$a_k := \langle \mathbf{x}_k, \mathbf{v}_2 \rangle, \quad
    b_k := \langle \mathbf{x}_k, \mathbf{v}_3 \rangle, \quad
    c_k := \langle \mathbf{x}_k, \mathbf{v}_1 \rangle.$$
\textcolor{darkgreen}{It is easy to see that the only unit-norm vector $\mathbf{x} \in \mathbb{R}^2$ which satisfies both $\frac{\langle \mathbf{x},\mathbf{v}_2 \rangle}{\langle \mathbf{x},\mathbf{v}_1 \rangle} = -\frac{1}{2}$ and $\frac{\langle \mathbf{x},\mathbf{v}_3 \rangle}{\langle \mathbf{x},\mathbf{v}_1 \rangle} = -\frac{1}{2}$ is $\mathbf{x} = \mathbf{v}_1 = (0,1)^\top$ or $\mathbf{x} = -\mathbf{v}_1 = (0,-1)^\top$. Therefore,} if we can find a neighbourhood of $\mathbf{v}_1$ (or $-\mathbf{v}_1$)  for $\mathbf{x}_0$ to lie in so that
$$\lim_{k \rightarrow \infty} \frac{\langle \mathbf{x}_{k+1},\mathbf{v}_2\rangle}{\langle \mathbf{x}_{k+1},\mathbf{v}_1 \rangle} = -\frac{1}{2}, \quad \lim_{k \rightarrow \infty} \frac{\langle \mathbf{x}_{k+1},\mathbf{v}_3\rangle}{\langle \mathbf{x}_{k+1},\mathbf{v}_1\rangle} = -\frac{1}{2},$$
then we will have proven that the tensor power method converges to $\mathbf{v}_1$ (or $-\mathbf{v}_1$) because this ratio of inner products is invariant under scaling of $\mathbf{x}_{k+1}$. Inner products are also invariant under rotations on the plane, and therefore this will also prove the robustness of $\mathbf{v}_2$ and $\mathbf{v}_3$ and for any regular simplex frame in $\mathbb{R}^2$, not just the Mercedez-Benz frame, since we can rotate a frame so that $\mathbf{x}_0$ lies in a neighbourhood of $\mathbf{v}_1$ (or $-\mathbf{v}_1$). We see that
$$\frac{\langle \mathbf{x}_{k+1},\mathbf{v}_2\rangle}{\langle \mathbf{x}_{k+1},\mathbf{v}_1 \rangle} = \frac{\frac{3}{4}(a_k^{d-1} - b_k^{d-1}) - \frac{1}{2}c_k^{d-1} + \frac{1}{4}(a_k^{d-1} + b_k^{d-1})}{c_k^{d-1} - \frac{1}{2}(a_k^{d-1} + b_k^{d-1})} = \frac{a_k^{d-1} - \frac{1}{2}b_k^{d-1} - \frac{1}{2}c_k^{d-1}}{-\frac{1}{2}a_k^{d-1} - \frac{1}{2}b_k^{d-1} + c_k^{d-1}}$$

\begin{equation}\label{alphaSubstitute}
= \frac{(\frac{a_k}{c_k})^{d-1} - \frac{1}{2}(\frac{b_k}{c_k})^{d-1} - \frac{1}{2}}{-\frac{1}{2}(\frac{a_k}{c_k})^{d-1} - \frac{1}{2}(\frac{b_k}{c_k})^{d-1} + 1} = \frac{(-\alpha_k)^{d-1} - \frac{1}{2}(-(1-\alpha_k))^{d-1} - \frac{1}{2}}{1 - \frac{1}{2}(-\alpha_k)^{d-1} - \frac{1}{2}(-(1-\alpha_k))^{d-1}} = -\alpha_{k+1}
\end{equation}

where we define
$$\alpha_k := -\frac{a_k}{c_k}, \quad 1 - \alpha_k = \frac{c_k + a_k}{c_k} = -\frac{b_k}{c_k}.$$

Therefore, we need to find a neighbourhood of $\alpha_0$ around $\frac{1}{2}$ for which $\lim_{k \rightarrow \infty} \alpha_k = \frac{1}{2}$. If we can do this, then we will also have proven $\lim_{k \rightarrow \infty} \frac{\langle \mathbf{x}_{k+1},\mathbf{v}_3\rangle}{\langle \mathbf{x}_{k+1},\mathbf{v}_1\rangle} = -\frac{1}{2}$ since $\frac{\langle \mathbf{x}_{k+1},\mathbf{v}_3\rangle}{\langle \mathbf{x}_{k+1},\mathbf{v}_1\rangle} = -(1 - \alpha_{k+1})$.

Note that the hypothesis in the theorem about the uniqueness of a maximizer ensures that $\alpha_0 \neq 0$. We claim that the neighbourhood where $0 < \alpha_0 < 1$ will suffice. This is the neighbourhood $S$ of $\mathbf{v}_1$ and $-\mathbf{v}_1$ where for every $\mathbf{x}_0 \in S$, $|\langle \mathbf{v}, \mathbf{x}_0 \rangle|$ is maximized among $\mathbf{v} \in \{\mathbf{v}_1,-\mathbf{v}_1,\mathbf{v}_2,-\mathbf{v}_2,\mathbf{v}_3,-\mathbf{v}_3\}$ by $\mathbf{v} = \pm\mathbf{v}_1$. Thus, assuming $0 < \alpha_0 < 1$, $0 < 1 - \alpha_0 < 0$, and since
\begin{equation}\label{alphaFraction}
    \alpha_{k+1} = -\frac{(-\alpha_k)^{d-1} - \frac{1}{2}(-(1-\alpha_k))^{d-1} - \frac{1}{2}}{1 + \frac{1}{2}\alpha_k^{d-1} + \frac{1}{2}(1-\alpha_k)^{d-1}} = \frac{\alpha_k^{d-1} - \frac{1}{2}(1-\alpha_k)^{d-1} + \frac{1}{2}}{1 + \frac{1}{2}\alpha_k^{d-1} + \frac{1}{2}(1-\alpha_k)^{d-1}}
\end{equation}
when $d$ is even, by the induction, the numerator and denominator are both positive, and hence $0 < \alpha_{k+1}$. Furthermore,

$$0 < \frac12 - \frac12\alpha_k^{d-1} + (1-\alpha_k)^{d-1}$$
by induction, and hence
$$\alpha_k^{d-1} - \frac12(1-\alpha_k)^{d-1} + \frac12 < 1 + \frac12\alpha_k^{d-1} + \frac12(1-\alpha_k)^{d-1},$$
which shows that $\alpha_{k+1} < 1$ by $(\ref{alphaFraction})$, and thus $0 < \alpha_k < 1$ for all $k$.

Now we show that $\left(\alpha_{k+1} - \frac12\right) - \left(\alpha_k-\frac12\right)  = C_{d,k}\left(\alpha_k - \frac12\right)$, where $C_{d,k}$ is a continuous function of $\alpha_k$, with $-1 < C_{d,k} < 0$ when $d \geq 6$ is even. We will also show that $C_{d,k} = 0$ if $\alpha_k = 0$ or $\alpha_k = 1$. Thus, assuming $d \geq 6$ is even and $0 < \alpha_k < 1$, there then exists a constant, namely $C_d = \sup_{k \geq 0} C_{d,k}$, such that $-1 < C_{d,k} \leq C_d < 0$ for all $k$. Thus, if we can prove these statements, then rearranging the equality, we would obtain $\left|\alpha_{k+1} - \frac{1}{2}\right| = C^{\prime}_{d,k}\left|\alpha_k - \frac{1}{2}\right|$ where $0 < C^{\prime}_{d,k} \leq C_d + 1 < 1$, which would imply that $\lim_{k \rightarrow \infty} \alpha_k = \frac{1}{2}$ when $d \geq 6$ and even. When $d = 4$, we will see that $C_{d,k} = 0$ and hence every initializing vector $\mathbf{x}_0$ is a fixed point of the tensor power method when $d = 4$. We have
\begin{align*}
&\left(\alpha_{k+1} - \frac12\right) - \left(\alpha_k - \frac12\right) = \left(\frac{\alpha_k^{d-1} - \frac12(1-\alpha_k)^{d-1} + \frac12}{1 + \frac12\alpha_k^{d-1} + \frac12(1-\alpha_k)^{d-1}} - \frac12\right) - \left(\alpha_k - \frac12\right)\\
&= \left(\frac{2\alpha_k^{d-1} - (1-\alpha_k)^{d-1} + 1 - 1 - \frac12\alpha_k^{d-1} - \frac12(1-\alpha_k)^{d-1}}{2(1 + \frac12\alpha_k^{d-1} + \frac12(1-\alpha_k)^{d-1})}\right) - \left(\alpha_k - \frac12\right)\\
&= \left(\frac{\frac32\alpha_k^{d-1} - \frac32(1-\alpha_k)^{d-1}}{2(1 + \frac12\alpha_k^{d-1} + \frac12(1-\alpha_k)^{d-1})}\right) - \left(\alpha_k - \frac12\right)\\
&= \frac{\frac32\alpha_k^{d-1} - \frac32(1-\alpha_k)^{d-1} - 2\alpha_k - \alpha_k^d - \alpha_k(1-\alpha_k)^{d-1} + 1 + \frac12\alpha_k^{d-1} + \frac12(1-\alpha_k)^{d-1}}{2 + \alpha_k^{d-1} + (1-\alpha_k)^{d-1}}\\
&=\frac{2\alpha_k^{d-1} - (1-\alpha_k)^{d-1} - 2\alpha_k + 1 - \alpha_k^d - \alpha_k(1-\alpha_k)^{d-1}}{2 + \alpha_k^{d-1} + (1-\alpha_k)^{d-1}}\\
&=\left( \frac{-2 + 4\sum_{j = 0}^{d-2}\alpha_k^{d-2-j}(1 - \alpha_k)^j - 2(1-\alpha_k)^{d-1}-2\alpha_k\sum_{j = 0}^{d-2}\alpha_k^{d-2-j}(1 - \alpha_k)^j}{2 + \alpha_k^{d-1} + (1-\alpha_k)^{d-1}}\right)\left(\alpha_k-\frac12\right)\\
&= 2\frac{(2-\alpha_k)\sum_{j = 0}^{d-2}\alpha_k^{d-2-j}(1 - \alpha_k)^j -(1+(1-\alpha_k)^{d-1})}{2 + \alpha_k^{d-1} + (1-\alpha_k)^{d-1}}\left(\alpha_k-\frac12\right) = C_{d,k}\left(\alpha_k-\frac12\right)\\
&=2(2-\alpha_k)\frac{\sum_{j = 0}^{d-2}\alpha_k^{d-2-j}(1 - \alpha_k)^j - \sum_{j = 0}^{d-2}(-1)^j(1 - \alpha_k)^j}{2 + \alpha_k^{d-1} + (1-\alpha_k)^{d-1}}\left(\alpha_k-\frac12\right).
\end{align*}
Notice that $C_{d,k}$ is a continuous function of $\alpha_k$ and when $\alpha_k = 0$ or $\alpha_k = 1$, $C_{d,k} = 0$. We observe that
\begin{align*}
&\sum_{j = 0}^{d-2}\alpha_k^{d-2-j}(1 - \alpha_k)^j - \sum_{j = 0}^{d-2}(-1)^j(1 - \alpha_k)^j \\
&= \sum_{j = 0}^{\frac{d-4}{2}}\left(\alpha_k^{d-2-2j}(1 - \alpha_k)^{2j} + \alpha_k^{d-2-(2j+1)}(1 - \alpha_k)^{2j+1}\right) + (1 - \alpha_k)^{d-2}\\
& \phantom{=} - \sum_{j = 0}^{\frac{d-4}{2}}\left((1 - \alpha_k)^{2j} - (1 - \alpha_k)^{2j+1}\right)  - (1 - \alpha_k)^{d-2}\\
&= \sum_{j = 0}^{\frac{d-4}{2}}\alpha_k^{d-2-(2j+1)}(1 - \alpha_k)^{2j} - \sum_{j = 0}^{\frac{d-4}{2}}\alpha_k(1 - \alpha_k)^{2j} \\
&= \sum_{j = 0}^{\frac{d-4}{2}} \left(\alpha_k^{d-2-(2j+1)} - \alpha_k\right)(1 - \alpha_k)^{2j}.
\end{align*}
When $d = 4$, this sum is $0$ and hence $C_{d,k} = 0$. When $d \geq 6$ and even, each term has $\left(\alpha_k^{d-2-(2j+1)} - \alpha_k\right)(1 - \alpha_k)^{2j} < 0$, and thus $C_{d,k} < 0$.

Starting with the inequality
$$\left((2\alpha_k - 1) + 2(2 - \alpha_k)\right)\sum_{j = 0}^{d-2}\alpha_k^{d-2-j}(1 - \alpha_k)^j = 3\sum_{j = 0}^{d-2}\alpha_k^{d-2-j}(1 - \alpha_k)^j > 0,$$
we find that
\begin{align*}
&\left(\alpha_k^{d-1} - (1 - \alpha_k)^{d-1}\right) + 2(2 - \alpha_k)\sum_{j = 0}^{d-2}\alpha_k^{d-2-j}(1 - \alpha_k)^j > 0 \\
& 2\left((2 - \alpha_k)\sum_{j = 0}^{d-2}\alpha_k^{d-2-j}(1 - \alpha_k)^j - \left(1 + (1 - \alpha_k)^{d-1}\right)\right) > -2 - \alpha_k^{d-1} - (1 - \alpha_k)^{d-1}\\
&C_{d,k} = 2\frac{(2-\alpha_k)\sum_{j = 0}^{d-2}\alpha_k^{d-2-j}(1 - \alpha_k)^j -(1+(1-\alpha_k)^{d-1})}{2 + \alpha_k^{d-1} + (1-\alpha_k)^{d-1}} > -1
\end{align*}
when $d \geq 6$ and even.
\end{proof}
\bibliographystyle{siamplain}
\bibliography{main}
\end{document}